\theoremstyle{plain}
\newtheorem{theorem}{Theorem}[section]
\newtheorem{lemma}[theorem]{Lemma}
\newtheorem{proposition}[theorem]{Proposition}
\newtheorem{corollary}[theorem]{Corollary}
\newtheorem{claim}[theorem]{Claim}
\theoremstyle{definition}
\newtheorem{definition}[theorem]{Definition}
\theoremstyle{remark}
\newtheorem{remark}[theorem]{Remark}
\newtheorem*{notation*}{Notation}
\numberwithin{equation}{section}
\newcommand{\R}{\mathbb{R}}
\newcommand{\Z}{\mathbb{Z}}
\newcommand{\N}{\mathbb{N}}
\newcommand*{\diff}{\mathop{}\!\mathrm{d}}
\newcommand{\wt}{\widetilde}
\newcommand{\1}{\mathbbm 1}
\renewcommand{\P }{\mathbb P}
\renewcommand{\epsilon}{\varepsilon}
\newcommand{\E}{{\mathbb E}}
\DeclareMathOperator*{\Bin}{Bin}
\DeclareMathOperator*{\Po}{Poi}
\DeclareMathOperator{\Ber}{Ber}
\newcommand{\floor}[1]{\left\lfloor #1 \right\rfloor}
\newcommand{\ceil}[1]{\left\lceil #1 \right\rceil}
\newcommand{\prs}[1]{\left( #1 \right)}
\newcommand{\crb}[1]{\left\{ #1 \right\}}
\newcommand{\sqb}[1]{\left[ #1 \right]}
\title{Limiting behaviour of pattern counts in biased binary strings}
\author{Jon V.\@~Kogan\footnote{Einstein Institute of Mathematics, The Hebrew University of Jerusalem, Givat Ram, Jerusalem 91904, Israel. E-mail:jonatan.kogan@mail.huji.ac.il} \  
and Nicolò Paviato\footnote{Einstein Institute of Mathematics, The Hebrew University of Jerusalem, Givat Ram, Jerusalem 91904, Israel. E-mail:  nicolo.paviato@gmail.com}}
\date{\today}
\begin{document}

\maketitle

\begin{abstract}
For $p \in (0,1)$, sample a binary sequence from the infinite product measure of Bernoulli$(p)$ distributions. It is known that for $p=1/2$, almost every binary sequence is Poisson generic in the sense of Peres and Weiss, a property that reflects a specific statistical pattern in the frequency of finite substrings.
However, this behaviour is highly exceptional: it fails for any $p \ne 1/2$. 
In these other cases, we show that the frequency of substrings of almost every sequence has either trivial or peculiar behaviour.
Nevertheless, the Poisson limiting regime can be recovered if one restricts attention to substrings with a fixed number of successes in the Bernoulli$(p)$ trials.
\end{abstract}

\maketitle

\section{Introduction}
What properties does a typical element $x\in\{0,1\}^\N$ satisfy? The simplest answer to this question is \textit{normality}, which Borel~\cite{Bor09} introduced more than a hundred years ago. Assuming $p\in(0,1)$ and that every digit of $x$ is chosen independently with a Bernoulli$(p)$ distribution, so $\P(1)=p$, we say that $x$ is $p$-normal if any finite word $\omega\in\{0,1\}^k$ appears in $x$ with asymptotic frequency $p^{|\omega|}(1-p)^{k-|\omega|}$, where $|\omega|$ is the Hamming weight  of $\omega$, that is the number of coordinates equal to one. Denoting with $\Ber(p)$ the Bernoulli$(p)$ probability measure,  it follows from the ergodic theorem that $\Ber(p)^\N$-almost every (a.e.) sequence is $p$-normal.

Given $x\in\{0,1\}^\N$, a natural follow up question is:  choosing $\omega\in\{0,1\}^k$ at random as above, how often does the word $\omega$ appear in the sequence $x$? If every digit of $x$ and $\omega$ is chosen independently uniformly and $k\ge1$ is large, then the answer is almost surely given by the Poisson distribution. To be more precise, we recall the concept of \textit{Poisson genericity} introduced by Zeev Rudnick~(see~\cite[Definition 1]{AlvBecMer23}). Consider $\{N_k\}_{k\ge1}$ a sequence of positive integers.
For a fixed $x\in\{0,1\}^\N$, we define
\begin{equation}\label{eq:M_k^x}
M_k^x(\omega)=\#\{1\le j\le N_k:(x_j,\dots,x_{j+k-1})=\omega\}, 
\end{equation}
the number of occurrences of $\omega$ in $x$, up to $N_k$.
Letting $N_k=2^k$ and picking $\omega\in\{0,1\}^k$ uniformly, such an~$x$ is said to be simply Poisson generic if $M_k^x$ converges in distribution to a Poisson random variable with parameter one (in short $M_k^x\xrightarrow[]{d}\Po(1)$). 
That is,
\begin{equation*}\label{eq:conv_to_Poi}
\lim_{k\to\infty}\P(M_k^x=n)=\frac{1}{e\cdot n!},  
\end{equation*}
for every $n\in\N\cup\{0\}$. 
Following the notation of~\cite{HocPav25}, we sometimes omit the term “simply” and refer to such $x$ as \textit{Poisson normal} for short.
Note that unqualified the term Poisson generic has a stronger meaning in~\cite{AlvBecMer23}. 

In an unpublished work, Peres and Weiss~\cite[lecture]{Wei20} proved that Poisson normality is strictly stronger than normality, and that $\Ber(1/2)^\N$-a.e.\@ sequence is Poisson generic~\cite{AlvBecMer23}. In the recent work~\cite{AlvBecCesMerPerWei24}, this result was extended to settings with infinite alphabets and exponentially mixing probability measures. It is relevant to mention that, in spite of the abundance of Poisson generic sequences, finding explicit examples is not trivial and the matter was solved for larger alphabets only in 2023 by~\cite{BecSac23}.

A natural generalisation of this setting is to consider different probability measures.
The first result in this direction is~\cite{HocPav25}, where the authors prove that almost sure Poisson normality is still satisfied if sequences are sampled by a non-stationary product measure which is sufficiently close to $\Ber(1/2)^\N$. Here, every digit of the words $\omega\in\{0,1\}^k$ is still sampled independently with probability $p=1/2$. On the other hand, the authors find a threshold for product measures on $\{0,1\}^\N$ above which almost sure Poisson genericity can break down. 

In this paper we consider new measures for both sequences and words, choosing the digits independently with a parameter $p\in(0,1)\setminus\{1/2\}$.
Without loss of generality, we henceforth assume $p> 1/2$. 
All results apply to the $p<1/2$ case by switching the labels of $0$ and $1$, corresponding to a switch of $p$ and $1-p$ in the formulae.
\par By choosing an appropriate asymptotic class for $N_k$, if the digits of~$\omega$ are not equiprobable, we show that $M_k^x$  exhibits a partial escape of mass to infinity.
To state this first result, we  require some extra notation.
We let $H\colon(0,1)\to\R$ denote the binary entropy function
$$H(x)=-x\log_2x-(1-x)\log_2(1-x).$$
Denote with $\Phi\colon\R\to(0,1)$ the cumulative distribution function of a standard Gaussian $\mathcal{N}(0,1)$, so for $s\in\R$,
\begin{equation}\label{eq:cumulant_standard_normal}
\Phi(s)=\frac{1}{\sqrt{2\pi}}\int_{-\infty}^se^{-x^2/2}\diff x.
\end{equation}
See Section~\ref{sec:setup} for a brief review of standard asymptotic notation ($o$, $\omega$, and $\Theta$).
\begin{theorem}\label{thm:main_teo}
Let $p\in(1/2,1)$. Then, as $k\to\infty$ and for $\Ber(p)^\N$-a.e.\@ sequence $x\in\{0,1\}^\N$:
\begin{enumerate}[(1)]
\item\label{item:conv_0_quenched} If ${N}_k=o\bigl(2^{k\cdot H(p)}a^{\sqrt{k}}\bigr)$ for all $a>0$, then $\P(M_k^x=0)\to1$.

\item\label{item:vague_0_quenched} If ${N}_k=\omega\bigl(2^{k\cdot H(p)}a^{\sqrt{k}}\bigr)$ for all $a>0$,  then $ \P( M_k^x\ge n)\to 1$ for any $n\ge0$.

\item\label{item:partial_escape_quenched} If $ N_k=\Theta\bigl( 2^{k\cdot H(p)}a^{\sqrt{k}}\bigr)$ for some $a>0$, then,
\begin{itemize}
\item$\lim_k\P({M}_k^x=0)=\Phi\bigl(-(\log_{p/(1-p)}(a))(p(1-p))^{-1/2}\bigr)$, and

\item$\lim_k\P( M_k^x= n)=0$ for any $n\ge1$.
\end{itemize}
\end{enumerate}
\end{theorem}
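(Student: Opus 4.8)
The plan is to condition on the Hamming weight $L:=\abs\omega$ of the random word, turning the problem into a family of ``uniform word'' problems controlled by first and second moments, and then to integrate over $L\sim\Bin(k,p)$. Write $w_j=(x_j,\dots,x_{j+k-1})$ for the $j$-th length-$k$ window of $x$, let $C_L=\#\{1\le j\le N_k:\abs{w_j}=L\}$, and set $r=p/(1-p)>1$. Conditioned on $\abs\omega=L$, the word $\omega$ is uniform on the $\binom kL$ strings of weight $L$, only windows of weight $L$ can equal $\omega$, and $\E_\omega[M_k^x\mid\abs\omega=L]=C_L/\binom kL=:\lambda_L$. I would first prove a quenched law of large numbers for window statistics: for $\Ber(p)^\N$-a.e.\ $x$, uniformly over each band $\abs{L-pk}\le c\sqrt k$ (with $c$ fixed), $C_L=(1+o(1))\,N_k\,\P(\Bin(k,p)=L)$ as $k\to\infty$, whence, using $p^L(1-p)^{k-L}=2^{-kH(p)}r^{L-pk}$,
\[
  \lambda_L=(1+o(1))\,N_k\,p^L(1-p)^{k-L}=(1+o(1))\,N_k\,2^{-kH(p)}\,r^{L-pk}.
\]
(If $N_k$ stays bounded the statement reduces to case (1), which follows from $\E_{x,\omega}[M_k^x]=N_k(p^2+(1-p)^2)^k\to0$.) Since $\{\abs{w_j}\}_j$ is $k$-dependent and $N_k$ is otherwise at least stretched-exponential in $k$, concentration of $C_L$ about its mean comes from a Bernstein/Hoeffding estimate and the a.e.\ statement from Borel--Cantelli, after summing the deviation probabilities over the $O(k)$ relevant $L$ and over $k$; weights outside the band carry $\omega$-probability $\delta(c)\to0$ as $c\to\infty$.

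Fix such an $x$. If $\lambda_L\to0$, Markov gives $\P_\omega(M_k^x\ge1\mid\abs\omega=L)\le\lambda_L\to0$ — this probability is $\abs{S_L}/\binom kL$ with $S_L$ the set of \emph{distinct} weight-$L$ windows and $\abs{S_L}\le C_L$ — so $\P_\omega(M_k^x=0\mid\abs\omega=L)\to1$. If $\lambda_L$ is bounded away from $0$ (the only case that matters when $M_k^x$ does not escape), I would run a second-moment argument on $Y:=(M_k^x\mid\abs\omega=L)$: one has $\E_\omega[Y(Y-1)]=\binom kL^{-1}(\sum_{\abs w=L}c_w^2-C_L)$ with $c_w$ the number of windows equal to $w$, so $\sum_wc_w^2-C_L$ counts ordered pairs $(j,j')$, $j\ne j'$, with $w_j=w_{j'}$ of weight $L$. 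Splitting these into disjoint ($\abs{j-j'}\ge k$) and overlapping ($0<\abs{j-j'}<k$) pairs, a computation of $\E_x$ shows the disjoint ones have mean $(1+o(1))\,C_L^2/\binom kL=(1+o(1))\,\lambda_L C_L$, while the overlapping (``clumping'') ones, which force a locally periodic block of $x$, have mean $\lesssim N_k\,k^2\,p^k\,\mathrm{poly}(k)=o(\lambda_L C_L)$ for every relevant $L$. A further concentration-plus-Borel--Cantelli step then gives $\E_\omega[Y(Y-1)]=(1+o(1))\lambda_L^2$ for a.e.\ $x$, so $\Var_\omega Y=(1+o(1))\lambda_L$, and Chebyshev yields $\P_\omega(M_k^x\le n\mid\abs\omega=L)=O(1/\lambda_L)+o(1)$ for each fixed $n$; in particular this is $o(1)$ when $\lambda_L\to\infty$ and small when $\lambda_L$ is a large constant.

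For the assembly, $L\mapsto\lambda_L$ is increasing with $\lambda_{L+1}/\lambda_L\to r$, so $\lambda_L\asymp r^{L-L_k^*}$ where $\lambda_{L_k^*}\asymp1$; explicitly $L_k^*=pk-\sqrt k\log_ra+O(1)$ in case (3), while $L_k^*-pk$ exceeds (resp.\ is below) every fixed multiple of $\sqrt k$ in case (1) (resp.\ (2)) — precisely the hypotheses ``for all $a>0$''. Fix $\delta>0$, then $c=c(\delta)$ so large that $\abs{L-pk}>c\sqrt k$ carries $\omega$-mass $<\delta$ and the band of width $c\sqrt k$ around $pk$ contains $L_k^*$ in cases (1),(3) or lies above it in case (2); then fix a large integer $h$. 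In case (1), $\lambda_L\to0$ throughout the band, so $\P_\omega(M_k^x=0)\ge(1-o(1))\P(\abs{L-pk}\le c\sqrt k)\ge(1-o(1))(1-\delta)\to1-\delta$; let $\delta\to0$. In case (2), $\lambda_L\to\infty$ throughout, so the second-moment bound gives $\P_\omega(M_k^x\ge n)\ge(1-o(1))(1-\delta)\to1-\delta$; let $\delta\to0$. In case (3), split over $\{L<L_k^*-h\}$, $\{\abs{L-L_k^*}\le h\}$, $\{L>L_k^*+h\}$: on the first, $\P_\omega(M_k^x=0\mid\cdot)\ge1-\lambda_L$ with $\lambda_L\le(1+o(1))r^{-h}$ and $\P_\omega(M_k^x=n\mid\cdot)\le\lambda_L$; the middle range has only $2h+1$ values of $L$, contributing $O(h/\sqrt k)$; on the third, the second-moment bound makes the contribution to $\P_\omega(M_k^x=0)+\P_\omega(M_k^x=n)$ be $\sum_{m>h}O(r^{-m})+o(1)=O(r^{-h})+o(1)$. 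Hence $\P_\omega(M_k^x=0)=\P(\abs\omega<L_k^*)+O(r^{-h})+o(1)$ and $\P_\omega(M_k^x=n)=O(r^{-h})+o(1)$ for $n\ge1$; letting $k\to\infty$, then $h\to\infty$, and invoking the central limit theorem $(\abs\omega-pk)/\sqrt{kp(1-p)}\Rightarrow\mathcal N(0,1)$ with $L_k^*=pk-\sqrt k\log_ra+O(1)$, we obtain $\P_\omega(M_k^x=0)\to\Phi(-(\log_{p/(1-p)}a)(p(1-p))^{-1/2})$ and $\P_\omega(M_k^x=n)\to0$. The hard part will be the two quenched concentration statements — the law of large numbers for $C_L$ and, more delicately, the bound on the repeated-window count $\sum_wc_w^2$ — holding simultaneously for a.e.\ $x$ and uniformly over the relevant weights; the overlap dependence among windows and the requirement that the deviation probabilities be summable in $k$ are the real technical work, whereas the conditional moment estimates and the closing Gaussian computation are comparatively routine.
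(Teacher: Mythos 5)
Your decomposition is genuinely different from the paper's, and it is plausible in outline, but the key technical step you flag as ``the real technical work'' hides more difficulty than you acknowledge, and the paper's route is specifically designed to avoid it. The paper never conditions on the Hamming weight $L=\abs\omega$; instead it works first in the coupled (annealed) space $\{0,1\}^{\N}\times\{0,1\}^{k}$, splits on whether $\abs\omega$ is below or above a moving threshold $pk+\beta(k)$, uses Markov on the low-weight side and the \emph{explicit binomial law} of the non-intersecting count $\wt M_k$ on the high-weight side (overlaps are then absorbed by comparing $M_k$ with the sub-count $Y_k\le M_k$ over disjoint windows), and only at the very end passes to the quenched statement by applying McDiarmid's inequality to the bounded function $f_k(x)=\Ber^k\bigl(\{\omega:M^x_k(\omega)=n\}\cap G_k\bigr)$ followed by Borel--Cantelli. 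Restricting to $G_k$ is precisely what makes the bounded-differences constant small ($k\cdot\max_{\omega\in G_k}\Ber^k(\omega)$), and applying McDiarmid to a probability rather than a count keeps everything in $[0,1]$. Your plan replaces this with two bespoke quenched concentration statements: a uniform law of large numbers for the window weight counts $C_L$, and a one-sided bound on the pair count $\sum_{\abs w=L}c_w^2$. The first is fine by $k$-dependent Hoeffding plus Borel--Cantelli. The second is where you have a real gap: a bounded-differences estimate on the pair count has Lipschitz constant of order $kN_k$ (flipping one digit of $x$ can alter up to $2k$ windows, each participating in up to $N_k$ pairs), which is far too large relative to the mean $\asymp N_k^2/2^{kH(p)}$, so McDiarmid does not give summable tails here — this is exactly the obstruction the paper's $f_k$-trick sidesteps. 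A Chebyshev bound does rescue your approach, because $\Var_x[\text{pair count}]/(\E_x[\text{pair count}])^2$ decays exponentially in $k$ (dominant term is the shared-index triple contribution $N_k^3\binom kL q^3$ against $(\E)^2\asymp N_k^4\binom kL^2q^4$, giving a ratio $\asymp \sqrt k/N_k$), and these tails are summable over $k$ and over the $O(k)$ relevant $L$. But you should say so explicitly: as written, ``concentration-plus-Borel--Cantelli'' reads as if a McDiarmid-style argument would work, and it would not. Two further small points. Your clumping bound: you estimate the mean overlapping pair count by $N_k k^2 p^k$; the number of overlapping ordered pairs is $\asymp N_k k$ not $N_k k^2$, and the per-pair probability bound $\P_x(w_j=w_{j'})\le p^k$ (for $0<\abs{j-j'}<k$) is correct but not via the crude count $2^{\abs{j-j'}}\max_\tau\Ber^{d+k}(\tau)$, which blows up for $\abs{j-j'}$ near $k$ when $p>1/\sqrt2$; one needs the identity $\sum_\tau\Ber^{d+k}(\text{period-}d\text{ word})=(p^{m}+(1-p)^{m})^{d}$ with $m=(d+k)/d$ and the elementary inequality $p^m+(1-p)^m\le p^{m-1}$. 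Finally, in case (1) your parenthetical ``If $N_k$ stays bounded the statement reduces to\ldots'' is unnecessary: for case (1) the first-moment/Markov bound on each $L$-slice is exactly what the paper also uses (via $\wt G_k$), and that argument works regardless of whether $N_k$ is bounded.
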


Theorem~\ref{thm:main_teo} provides a complete characterization of the typical asymptotic behaviour of the distribution of $M_k^x$. As a comparison, recall that when $p = 1/2$ and $N_k = 2^{k \cdot H(p)} = 2^k$, it holds that $M_k^x \xrightarrow[]{d} \Po(1)$ for $\Ber(p)^\N$-a.e.\@~sequence~$x$~\cite{Wei20}.
Although the loss of mass becomes smaller as $p \to 1/2$, the asymptotic behaviour remains qualitatively different to Poisson. 

The lower order term $a^{\sqrt{k}}$ influences the extent of loss of mass, which increases monotonically with $a$. The exponent $\sqrt{k}$ is a result of an application of the central limit theorem to the Hamming weight of a random word (which is a Bernoulli random variable), see Remark~\ref{rem:application_CLT} and the proof of Proposition~\ref{prop:annealed_convergence_non_intersecting}.
In particular when $a = 1$, exactly half of the total probability mass concentrates at zero (independently of the value of $p$), while the other half escapes to infinity.

As a consequence of Theorem~\ref{thm:main_teo}, we show that when $p\neq 1/2$,  $M_k^x$ typically does not converge to a Poisson random variable.
In other words, Poisson normality is a concept strictly tied to equiprobability of digits.

\begin{corollary}\label{cor:non_Poisson}
Let $\{N_k\}_{k\ge1}$ be any sequence of positive integers and let $p\neq1/2$. Then, for $\Ber(p)^\N$-a.e.\@ $x\in\{0,1\}^\N$, $M_k^x$ does not converge in distribution to  any non-zero random variable.
\end{corollary}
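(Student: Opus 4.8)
The plan is to deduce the corollary from the third item of Theorem~\ref{thm:main_teo}, combined with the trivial monotonicity of the pattern count in its truncation level. I stress at the outset that one cannot simply case-split on which of the three regimes of Theorem~\ref{thm:main_teo} contains $\{N_k\}$: those regimes are \emph{not} exhaustive --- for instance $N_k=\ceil{2^{kH(p)}\,2^{k^{1/3}}}$ lies in none of them, since $N_k/(2^{kH(p)}a^{\sqrt k})\to0$ for $a>1$ but $\to\infty$ for $a\le1$ --- and sidestepping this is, I expect, the only genuine point of the argument. Throughout, write $M_k^x(\,\cdot\,;N)$ for the quantity in~\eqref{eq:M_k^x} with truncation level $N$ in place of $N_k$; for every $x$ and $\omega$ the map $N\mapsto M_k^x(\omega;N)$ is non-decreasing, so $N\mapsto\P(M_k^x(\,\cdot\,;N)=0)$ is non-increasing and $N\mapsto\P(M_k^x(\,\cdot\,;N)\ge n)$ is non-decreasing.

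First I would set up a dense family of reference truncations. For each rational $c$ put $\mathcal N^{(c)}_k:=\ceil{2^{kH(p)}2^{c\sqrt k}}$, so that $\mathcal N^{(c)}_k=\Theta(2^{kH(p)}a^{\sqrt k})$ with $a=2^c$, and the third item of Theorem~\ref{thm:main_teo} applies: outside a null set $A_c$ one has $\P(M_k^x(\,\cdot\,;\mathcal N^{(c)}_k)=0)\to\varphi(c)$ and $\P(M_k^x(\,\cdot\,;\mathcal N^{(c)}_k)=n)\to0$ for every $n\ge1$, hence also $\P(M_k^x(\,\cdot\,;\mathcal N^{(c)}_k)\ge n)\to1-\varphi(c)$ for every $n\ge1$. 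Here $\varphi(c)$ denotes the limit in that item evaluated at $a=2^c$; writing $\varphi(c)=\Phi(-\kappa c)$ with $\kappa:=p(1-p)^{-1/2}/\log_2(p/(1-p))>0$ exhibits the only features I use: $\varphi$ is continuous and strictly decreasing, $\varphi(c)\to0$ as $c\to+\infty$, $\varphi(c)\to1$ as $c\to-\infty$, and $\varphi(c)\in(0,1)$ for finite $c$. Let $A:=\bigcap_{c\in\Q}A_c$, a full-measure event independent of $\{N_k\}$.

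Next, fix $x\in A$ and an arbitrary sequence $\{N_k\}$ of positive integers, and set $\rho_k:=(\log_2 N_k-kH(p))/\sqrt k$. Using compactness of $[-\infty,+\infty]$, pass to a subsequence $k_j$ with $\rho_{k_j}\to L\in[-\infty,+\infty]$. If $L\in\R$, then for any rationals $c<L<c'$ the fact that $N_{k_j}$ is an integer forces $\mathcal N^{(c)}_{k_j}\le N_{k_j}\le\mathcal N^{(c')}_{k_j}$ for all large $j$, and monotonicity together with the previous step sandwich $\liminf_j$ and $\limsup_j$ of $\P(M_{k_j}^x(\,\cdot\,;N_{k_j})\ge n)$ between $1-\varphi(c)$ and $1-\varphi(c')$; letting $c\uparrow L$ and $c'\downarrow L$ (continuity of $\varphi$) gives $\P(M_{k_j}^x(\,\cdot\,;N_{k_j})\ge n)\to1-\varphi(L)$ for every $n\ge1$, so $\P(M_{k_j}^x(\,\cdot\,;N_{k_j})=n)\to0$ for every $n\ge1$ and $\P(M_{k_j}^x(\,\cdot\,;N_{k_j})=0)\to\varphi(L)$. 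If $L=+\infty$ I compare only from below, with the $\mathcal N^{(c)}_{k_j}$, and send $c\to+\infty$ to get $\P(M_{k_j}^x(\,\cdot\,;N_{k_j})\ge n)\to1$ for every $n\ge1$, hence $\P(M_{k_j}^x(\,\cdot\,;N_{k_j})=n)\to0$ for every $n\ge0$. If $L=-\infty$ the symmetric comparison from above (with the $\mathcal N^{(c')}_{k_j}$, $c'\to-\infty$) yields $\P(M_{k_j}^x(\,\cdot\,;N_{k_j})=0)\to1$.

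Finally, in every case $\P(M_{k_j}^x(\,\cdot\,;N_{k_j})=n)\to0$ for all $n\ge1$ along the subsequence, so if $M_k^x$ converged in distribution to $\Po(\lambda)$ we would get $\lambda e^{-\lambda}=\lim_j\P(M_{k_j}^x(\,\cdot\,;N_{k_j})=1)=0$, forcing $\lambda=0$; but $\Po(0)=\delta_0$ would then require $\P(M_k^x(\,\cdot\,;N_k)=0)\to1$, which the previous step contradicts whenever $L>-\infty$ (the subsequential limit of that probability being $\varphi(L)<1$ for finite $L$, and $0$ for $L=+\infty$). The single remaining case $L=-\infty$ is precisely the first item of Theorem~\ref{thm:main_teo} (equivalently, $N_k=o(2^{kH(p)}a^{\sqrt k})$ for all $a>0$), where $M_k^x$ genuinely converges to $\delta_0=\Po(0)$; so this case must be excluded, i.e.\ the corollary rules out convergence to a \emph{non-degenerate} Poisson law, the degenerate limit occurring exactly in the regime of the theorem's first item. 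Apart from these two matters --- the non-exhaustive trichotomy, circumvented by the monotone sandwich, and the degenerate parameter, which must be excluded --- the argument is routine bookkeeping.
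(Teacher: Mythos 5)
Your proof is correct, and it takes a genuinely different --- and arguably more careful --- route than the paper's. The paper's own proof writes $N_k = 2^{kH(p)}a_k^{\sqrt k}$, passes to a subsequence along which $a_{k_j}$ converges in $[0,\infty]$, and asserts that each of the three limiting regimes falls under the corresponding item of Theorem~\ref{thm:main_teo}. This is fine when $a_{k_j}\to0$ or $a_{k_j}\to\infty$, but when $a_{k_j}\to a\in(0,\infty)$ the paper simply states that "the asymptotic of $N_{k_j}$ is the same as point (3)," which is not literally true: $a_{k_j}\to a$ does \emph{not} imply $N_{k_j}=\Theta(2^{k_jH(p)}a^{\sqrt{k_j}})$ (your example $N_k=\ceil{2^{kH(p)}2^{k^{1/3}}}$, where $a_k\to1$ but the ratio tends to $\infty$, shows that even a sub-subsequence cannot rescue the $\Theta$ hypothesis). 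You correctly identify that the trichotomy is not exhaustive, and your monotone-sandwich argument --- comparing an arbitrary truncation $N_k$ from above and below against the dense family $\mathcal N^{(c)}_k$ for rational $c$, then squeezing via continuity of $\varphi$ --- rigorously fills this gap. What the paper's approach buys is brevity (the informal observation that the conclusion $\P(M_{k_j}^x=1)\to0$ holds "in all three cases" is morally right); what your approach buys is an airtight treatment of the intermediate regimes, plus, as a free bonus, a single full-measure set $A=\bigcap_{c\in\Q}A_c$ that works for \emph{every} choice of $\{N_k\}$ simultaneously, which is strictly stronger than the stated corollary. Your handling of the degenerate $\Po(0)$ case also matches the paper's explicit convention $\lambda>0$ given just before its proof. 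One small wording issue in your final paragraph: having fixed a single subsequence with $\rho_{k_j}\to L$, the case $L=-\infty$ is not by itself "precisely the first item of Theorem~\ref{thm:main_teo}" --- one should observe that if \emph{every} subsequential limit of $\rho_k$ is $-\infty$ then $\rho_k\to-\infty$ along the full sequence (which is item~\ref{item:conv_0_quenched}), and otherwise choose a subsequence with $L>-\infty$; the fix is immediate but worth stating.
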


In our last result, for any $p\in(0,1)$ we recover Poisson normality under an artificial equiprobability condition. Choosing uniformly from words $\omega\in\{0,1\}^k$ with a fixed Hamming weight $|\omega|$, we show that $\Ber(p)^\N$-a.e.\@ sequence displays a convergence to Poisson.
For $c\in\R$, we let  $n_k=\lfloor pk-c\sqrt{k}\rfloor$, $k\ge1$. 
We define the set
\[
F_k=\bigl\{\omega\in\{0,1\}^k:|\omega|=n_k\bigr\},
\]
and write $\ddot M^x_{k}\colon\{0,1\}^k\to\N\cup\{0\}$ for the sequence $M^x_k$ from~\eqref{eq:M_k^x}, where $\omega$ is chosen uniformly from $F_k$. The next result shows that this is sufficient to recover the Poisson behaviour.

\begin{theorem}\label{thm:quenched_conditional_poisson}
Let $p\in (0,1)$, $\lambda>0$, and let $N_k=\lfloor \lambda/(p^{n_k}(1-p)^{k-n_k})\rfloor$. Then, 
$\ddot M^x_k\xrightarrow{d} \Po(\lambda)$ for $\Ber(p)^\N$-a.e.\@~$x\in\{0,1\}^\N$.
\end{theorem}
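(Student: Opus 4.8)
\emph{Proof sketch.}\quad The plan is to argue by the method of (factorial) moments. Since the law $\Po(\lambda)$ is determined by its moments, it suffices to show that for every fixed integer $r\ge1$,
\[
\E_{\omega}\bigl[(\ddot M_k^x)_r\bigr]:=\E_\omega\bigl[\ddot M_k^x(\ddot M_k^x-1)\cdots(\ddot M_k^x-r+1)\bigr]\xrightarrow[k\to\infty]{}\lambda^r
\]
for $\Ber(p)^\N$-a.e.\ $x$, where $\E_\omega$ is the expectation over $\omega$ drawn uniformly from $F_k$; intersecting the countably many exceptional null sets (one per $r$) then gives the theorem. Put $\rho_k=p^{n_k}(1-p)^{k-n_k}$, so that $N_k\rho_k\to\lambda$, and note that $|F_k|=\binom{k}{n_k}=2^{(H(p)+o(1))k}$ grows faster than any power of $k$. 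Expanding the falling factorial one gets the identity
\[
\E_\omega\bigl[(\ddot M_k^x)_r\bigr]=\frac{1}{|F_k|}\sum_{\vec j}\1_x[\vec j],
\]
the sum over ordered $r$-tuples $\vec j=(j_1,\dots,j_r)$ of distinct indices in $\{1,\dots,N_k\}$, where $\1_x[\vec j]$ is the indicator that the $r$ length-$k$ windows $(x_{j_i},\dots,x_{j_i+k-1})$, $1\le i\le r$, coincide and have Hamming weight $n_k$.

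\smallskip
\emph{Annealed mean.}\quad I would split the sum over $\vec j$ according to the connected components of the graph on $\{j_1,\dots,j_r\}$ joining two indices whose windows overlap, i.e.\ differ by less than $k$. The tuples with all $r$ windows pairwise disjoint number $(1+o(1))N_k^r$, and for each of them independence gives $\E_x\1_x[\vec j]=\sum_{\omega\in F_k}\rho_k^r=|F_k|\rho_k^r$, so this block contributes $(1+o(1))(N_k\rho_k)^r\to\lambda^r$ after division by $|F_k|$. For the tuples whose overlap graph has $m<r$ components the crucial input is
\[
q_d:=\sum_{\omega\in F_k}\P_x\bigl((x_j,\dots,x_{j+k-1})=\omega=(x_{j+d},\dots,x_{j+d+k-1})\bigr)\le\rho_k\qquad(0<d<k),
\]
which is where the fixed-weight hypothesis is used: realising such an $\omega$ forces $x$ to be $d$-periodic on an interval of length $k+d$, hence determined by a word $b\in\{0,1\}^d$, and a short computation gives $q_d=\rho_k\,\P_{b\sim\Ber(p)^d}(\text{the length-}k\text{ prefix of }b^{\infty}\text{ has weight }n_k)\le\rho_k$. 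Applying this to the two leftmost indices of each nonsingleton component bounds $\E_x\1_x[\vec j]$ by $\rho_k^{\,m}$ (with no factor $|F_k|$), while for each of the finitely many component structures there are $O_r(N_k^m k^{r-m})$ tuples; each such contribution is therefore $O_r(k^{r-m}/|F_k|)=o(1)$, and summing up yields $\E_x\E_\omega[(\ddot M_k^x)_r]\to\lambda^r$.

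\smallskip
\emph{Concentration.}\quad To upgrade the mean to an almost-sure limit I would estimate the variance over $x$ of $|F_k|^{-1}\sum_{\vec j}\1_x[\vec j]$. Only pairs $(\vec j,\vec j')$ having at least one overlapping pair of windows contribute to $\sum_{\vec j,\vec j'}\operatorname{Cov}_x(\1_x[\vec j],\1_x[\vec j'])$, and there are $O_r(k\,N_k^{2r-1})$ such ``linked'' pairs. For the dominant configurations — both tuples internally non-overlapping, linked by a single crossing overlap — one finds, summing the weight-constrained probabilities over the two common words via a Vandermonde identity, that $\E_x[\1_x[\vec j]\1_x[\vec j']]\le\rho_k^{\,2r-1}|F_k|$; the remaining, more degenerate, configurations are of lower order. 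Altogether $\Var_x\bigl(\E_\omega[(\ddot M_k^x)_r]\bigr)=O_r(k/|F_k|)$, which is summable in $k$ since $|F_k|$ is exponentially large. Chebyshev's inequality together with the Borel--Cantelli lemma then gives $\E_\omega[(\ddot M_k^x)_r]\to\lambda^r$ for $\Ber(p)^\N$-a.e.\ $x$, and intersecting over $r\in\N$ completes the proof via the moment method.

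\smallskip
The step I expect to be the main obstacle is the second-moment bookkeeping: one must classify the overlap patterns between two $r$-tuples and bound $\E_x[\1_x[\vec j]\1_x[\vec j']]$ uniformly, tracking how the prescribed weight $n_k$ is distributed across overlapping windows (the role of the Vandermonde identity). The annealed computation and the passage through the method of moments are, by contrast, routine once the inequality $q_d\le\rho_k$ is in hand.
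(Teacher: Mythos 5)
Your proposal is correct in outline, and it takes a genuinely different route from the paper's proof. The paper first proves the annealed statement (Proposition~\ref{prop:conditional_poisson_annealed}) by applying a Chen--Stein-type total variation bound (\cite[Theorem~6.23]{JanLucRuc00}, reproduced as Theorem~\ref{thm:dtv}) together with a two-case overlap estimate (Lemma~\ref{lemma:expI_iI_j}), and then transfers to the quenched setting by applying McDiarmid's inequality to the functions $g_k(x)=\nu^k(\omega:\ddot M_k^x=n)$ and summing via Borel--Cantelli. You instead run the method of factorial moments directly on $\E_\omega[(\ddot M_k^x)_r]$, establish the annealed mean by decomposing $r$-tuples into overlap components, and handle the quenched passage by a Chebyshev bound on $\Var_x\bigl(\E_\omega[(\ddot M_k^x)_r]\bigr)$ plus Borel--Cantelli (legitimate here because $|F_k|$ grows exponentially). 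The two approaches thus differ both in the annealed step (Chen--Stein second-moment inequality versus all factorial moments) and in the concentration step (exponential concentration from McDiarmid versus polynomial concentration from Chebyshev, both summable). Your key estimate $q_d\le\rho_k$ plays the role of the paper's Lemma~\ref{lemma:expI_iI_j}, but your periodicity argument yields a single bound uniform in the overlap length $d$, which is a bit cleaner than the paper's case split at $\ell=\sqrt{k}$. What the paper's route buys is modularity (a quantitative total variation bound, then a transfer step that is reused verbatim for Theorem~\ref{thm:main_teo}) and stronger concentration suited to the ``stronger Poisson genericity'' alluded to in the remark following the theorem; what your route buys is self-containedness (no Chen--Stein) at the cost of the heavier combinatorial bookkeeping you flag yourself. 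Two small notes: in bounding $\E_x[\1_x[\vec j]]$ by $\rho_k^{\,m}$ you need to anchor the sum over $\omega$ at a single nonsingleton component and bound all the others by one factor of $\rho_k$ each (the naive component-by-component product would re-introduce a factor $|F_k|$); and the step $\sum_{\omega,\omega'}\P(\text{two overlapping windows equal }\omega,\omega')\le|F_k|\rho_k$ follows directly from $\P(\text{both in }F_k)\le\P(\text{one in }F_k)$, so the Vandermonde identity you anticipate is not actually needed there.
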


Theorem~\ref{thm:quenched_conditional_poisson} yields two key observations. First, letting $a=(p/(1-p))^c$, a calculation yields that $N_k=\Theta(2^{k\cdot H(p)}a^{\sqrt{k}})$, which is the asymptotic class mentioned in Theorem~\ref{thm:main_teo}-\ref{item:partial_escape_quenched}.
Second, this shows that while the Poisson regime can still be recovered, doing so requires a condition on the Hamming weight, whose probability tends to 0.

\begin{remark}
Theorem~\ref{thm:quenched_conditional_poisson} is stated for  convergence to Poisson for simplicity, but it also holds for a stronger notion of Poisson genericity, similar to the one found in~\cite{AlvBecMer23}. The two differences between this notion and the one of~\cite{AlvBecMer23} are that (a) we consider typical sequences from the measure $\Ber(p)^\N$, $p\in(0,1)$, and (b)~here substrings are chosen uniformly from $F_k$. 
This stronger version of Theorem~\ref{thm:quenched_conditional_poisson} can be proven following the same ideas of~\cite{AlvBecMer23}.
\end{remark}

The rest of the paper is organised as follows. 
In Section~\ref{sec:setup} we introduce the main notation and the two models (intersecting and non-intersecting) that we use. These are used in Section~\ref{sec:annealed} to prove Theorem~\ref{thm:main_teo} on the enlarged probability space $\{0,1\}^\N\times\{0,1\}^k$, also known as annealed case. In Section~\ref{sec:cond_Poi_conv} we borrow ideas from~\cite{AlvBecMer23} to prove the annealed version of Theorem~\ref{thm:quenched_conditional_poisson}. Finally, we prove our main theorems in Section~\ref{sec:quenched}, by passing the results of Sections~\ref{sec:annealed} and~\ref{sec:cond_Poi_conv} to the original probability spaces.

\section{Setup and notation}\label{sec:setup}
We let $\mathbb{N}=\{1,2,3,\ldots\}$, and let $p\in(1/2,1)$. We write $\Ber(p)=p\delta_1 + (1-p)\delta_0$, and define the product measures
\begin{equation*}\label{eq:def_measures}
\Ber(p)^k=\prod_{i=1}^k \Ber(p),
\qquad\text{and}\qquad
\Ber(p)^\N=\prod_{i=1}^\infty \Ber(p),
\end{equation*}
on $\{0,1\}^k$ and $\{0,1\}^\N$ respectively. For brevity, we will mostly omit the dependency of the measures on $p$.

\begin{definition}\label{def:HammingWeight}
    Define the \textit{Hamming weight} of a string $\omega\in\{0,1\}^k$ to be $|\omega|=\sum_{j=1}^k\omega_j$.
\end{definition}
\noindent Notice that for any word $\omega\in\{0,1\}^k$, 
\begin{equation}\label{eq:mu^k(omega)}
\Ber^k(\omega)=p^{|\omega|}(1-p)^{k-|\omega|}.    
\end{equation}

 For $c\in \R$ and  $n_k=\lfloor pk-c\sqrt{k}\rfloor$, $k\ge1$, we define the set
\begin{equation}\label{eq:F_k}
    F_k=\bigl\{\omega\in\{0,1\}^k:|\omega|=n_k\bigr\},
\end{equation}
and on the set $\{0,1\}^k$ the probability measure 
\begin{equation}\label{eq:nu}
\nu^k(\cdot)=\Ber^k(\cdot|F_k).    
\end{equation}

In the remaining part of this section, we introduce two models in which both the sequence and the word result from random selection. In the first (non-intersecting) model, we consider independent samples from $\{0,1\}^k$ drawn according to the product measure $\Ber^k$. In the second (intersecting) model, all finite strings are extracted from a single $\Ber^\N$-random sequence $x\in\{0,1\}^\N$. In the latter case, the samples are no longer independent when they have digits in common.

\begin{notation*}
For $x\in\R$, we define the lower and upper integer parts of $x$ as
\[
\floor{x}=\sup\bigl\{n\in\Z:n\le x\bigr\}
\qquad\text{and}\qquad
\ceil{x}=\inf\bigl\{n\in\Z:n\ge x\bigr\}.
\]

\noindent For two sequences $a_n, b_n>0$, we write:
\begin{itemize}
\item $a_n\sim b_n$ if $\lim_{n\to\infty}a_n/b_n=1$,
\item $a_n=o(b_n)$ if $\lim_{n\to\infty}a_n/b_n=0$;
\item $a_n=\omega(b_n)$ if $\lim_{n\to\infty}a_n/b_n=\infty$;
\item $a_n=\Theta(b_n)$ if there are constants $C_1,C_2>0$ such that $C_1\le a_n/b_n\le C_2$, for all sufficiently large $n$.
\item $a_n=O(b_n)$ if there is $C>0$ such that $a_n\le C b_n$ for all sufficiently large $n$.
\end{itemize}
\end{notation*}

\subsection{Non-intersecting model}\label{subsec:non_intersecting}
We let $\{X^k_{i}\}_{k,i\ge0}$ be a family of iid  random variables with $\Ber(p)$ distribution, defined on a common space with probability measure $\P$.
For $k,j\ge1$, we  define
\[
W=W(k)=(X_0^k,\dots,X_{k-1}^k)\qquad\text{and}\qquad
Z^{(j)}=Z^{(j)}(k)=(X^k_{jk},\dots,X^k_{jk+k-1}).
\]
Note that the real random variable $|W|$ follows a Binomial distribution with parameters~$k$ and~$p$ (in brief $|W|\sim\Bin(k,p)$). 

Since, for every $k \ge 1$, the vectors $Z^{(j)}$ are constructed so that they do not share any common entries, we refer to this as the \textit{non-intersecting model}. In this setting, the random variables are independent, as the presence or absence of a given word in one block has no influence on its appearance in another.  Moreover, the random vectors have the common distribution $\P(Z^{(1)}=\omega)=\Ber^k(\omega)$ from~\eqref{eq:mu^k(omega)}, for $\omega\in\{0,1\}^k$. 

For a sequence of positive integers $(\wt{N}_k)_{k\ge1}$, we define
\begin{equation}\label{eq:wtM_k}\textstyle
    \wt{M}_k=\#\bigl\{1\le j\le \wt N_k:W=Z^{(j)}\bigr\}=
    \sum_{j=1}^{\wt N_k}\1_{\{W=Z^{(j)}\}}.
\end{equation}
For every $j\ge1$
\begin{equation}\label{eq:W=Z}
\P(W=Z^{(j)}|W=\omega)=\P(Z^{(j)}=\omega)=p^{|\omega|}(1-p)^{k-|\omega|},
\end{equation}
as in~\eqref{eq:mu^k(omega)}.
Hence, for every $j,k\ge1$,
\begin{equation}\label{eq:exp_Ij}
\begin{split}\textstyle
     \E\bigl[\1_{\{W=Z^{(j)}\}}\bigr]=&\textstyle\sum_{\omega\in\{0,1\}^k}
     \P\bigl(\1_{\{W=Z^{(j)}\}}=1|W=\omega\bigr)\P(W=\omega)\\
     =&\textstyle \sum_{\omega\in\{0,1\}^k}p^{2|\omega|}(1-p)^{2(k-|\omega|)}\\
     =&\textstyle \sum_{i=0}^k\binom{k}{i}p^{2i}(1-p)^{2(k-i)}=(p^2+(1-p)^2)^k,
\end{split}
\end{equation}
by the binomial theorem.
Moreover, for every $n=0,\dots,\wt{N}_k$,
\begin{equation}\label{eq:distribution_wtM}
\begin{split}
\P(\wt{M}_k=n)&\textstyle=\sum_{\omega\in\{0,1\}^k}
\P(\wt{M}_k=n|W=\omega)\P(W=\omega)\\
&\textstyle=\binom{\wt{N}_k}{n}\sum_{\omega\in\{0,1\}^k} \Ber^k(\omega)^{n}(1-\Ber^k(\omega))^{\wt{N}_k-n}\Ber^k(\omega)\\
&\textstyle=\binom{\wt{N}_k}{n}\sum_{i=0}^k\binom{k}{i}
(p^{i}(1-p)^{k-i})^{n+1}(1-p^{i}(1-p)^{k-i})^{\wt{N}_k-n}.
\end{split}
\end{equation}

Given a sequence $\beta(k)\in\R$ for $k\ge1$, we define two sequences of events:
\begin{equation}\label{eq:wtG_k_and_wtH_k}
\begin{split}
\wt G_k(\beta)&=\{|W|\le pk+\beta(k)\}
\qquad\text{and}\qquad
\wt H_k(\beta)=\{|W|> pk+\beta(k)\}.     
\end{split}   
\end{equation}
For brevity, we will sometimes omit the dependency of the events on $\beta$.

Let $\Phi_p\colon\R\to(0,1)$ be the cumulative distribution function of a Gaussian $\mathcal{N}(0,p(1-p))$. Using our notation in~\eqref{eq:cumulant_standard_normal}, we have for $s\in\R$,
\begin{equation}\label{eq:cumulant_p}
\Phi_p(s)=\Phi\bigl((p(1-p))^{-1/2}s\bigr).
\end{equation}

\begin{remark}\label{rem:application_CLT}
If $\beta(k)=s\sqrt{k}$ for some $s\in\R$, the central limit theorem (CLT) yields that $\P(\wt G_k(\beta))\to\Phi_p(s)$ and $\P(\wt H_k(\beta))\to1-\Phi_p(s)$.
Such limits are still valid if a lower order term $b_k=o(\sqrt{k})$ is added to $\beta(k)$. This can be shown by proving that
\[
\P\biggl(\frac{|W|-pk}{\sqrt{k}}\le c\biggr)-
\P\biggl(\frac{|W|-pk}{\sqrt{k}}\le c+o(1)\biggr)\to 0,
\]
 as $k\to\infty$.
\end{remark}

\subsection{Intersecting model}\label{subsec:intersecting}
For $k\ge1$, we denote by $P_k=\Ber^\N\times\Ber^k$  the probability measure defined on $\Omega_k=\{0,1\}^\N\times\{0,1\}^k$, and write $\E_k$ for the corresponding expectation. 
For $j,k\ge1$, we define the indicators $I_{j}:\Omega_k\rightarrow\{0,1\}$
by
\begin{equation}\label{eq:Indicator}
    I_{j}(x,\omega)=\left\{ \begin{array}{cc}
    1 & x_{j}\ldots x_{j+k-1}=\omega,\\
    0 & \text{otherwise}.
    \end{array}\right.
\end{equation}

In this model, the random variables are not independent, since the appearance (or absence) of a word $\omega$ at a given position in $x$ influences the probability of its occurrence in blocks with common entries. Unlike the non-intersecting model described in Subsection~\ref{subsec:non_intersecting}, here all relevant words are extracted from the same random sequence $x$. For this reason, we refer to this model as the \textit{intersecting} one.

We let $\{N_k\}_{k\ge1}$ be a sequence of positive integers, and define the sequence of random variables $M_{k}:(\Omega_k,P_k)\rightarrow\N\cup\{0\}$
by 
\begin{equation}\label{eq:Mk}\textstyle
    M_{k}(x,\omega)=\#\{1\leq j\leq N_k: x_{i}\ldots x_{i+k-1}=\omega\}=\sum_{j=1}^{N_k}I_{j}(x,\omega).
\end{equation}
For $\nu^k$ as in~\eqref{eq:nu}, let $\ddot P_k = \Ber^\N \times \nu^k$ be a probability measure on $\Omega_k$.  We denote by $\ddot M_k$ the sequence $M_k$, when the underlying probability space is $(\Omega_k,\ddot P_k)$.
When notationally useful, we will identify the set $F_k$ with its lift on $\Omega_k$, that is
$\{0,1\}^\N\times F_k$. So, we can write $\ddot P_k(\cdot)=P_k(\cdot\mid F_k)$.

Given a a sequence $\beta(k)\in\R$, we define the following family of  sets:
\begin{equation}\label{eq:G_k_and_H_k}
\begin{split}
G_k(\beta)&=\bigl\{\omega\in\{0,1\}^k:|\omega|\le pk+\beta(k)\bigr\},\\
H_k(\beta)&=\bigl\{\omega\in\{0,1\}^k:|\omega|> pk+\beta(k)\bigr\}.    
\end{split}
\end{equation}
As with the events $\widetilde{G}_k$ and $\widetilde{H}_k$ defined in~\eqref{eq:wtG_k_and_wtH_k}, we will sometimes omit the dependency on $\beta$.
For the random word $W$ defined as in Subsection~\ref{subsec:non_intersecting}, we observe that 
\[
\widetilde{G}_k = \{ |W| \in G_k \} \quad \text{and} \quad \widetilde{H}_k = \{ |W| \in H_k \}.
\]
Note that $\wt G_k$ and $\wt H_k$ are events in the general probability space from Subsection~\ref{subsec:non_intersecting}, while $G_k$ and $H_k$ are subsets of $\{0,1\}^k$.
As done for $F_k$, we will sometimes identify the sets $G_k$ and $H_k$ with respectively $\{0,1\}^\N\times G_k$ and $\{0,1\}^\N\times H_k$.
Furthermore, the following identities hold:
\begin{equation}\label{eq:identities}
P_k(G_k) = \mathbb{P}(\widetilde{G}_k) \qquad \text{and} \qquad P_k(H_k) = \mathbb{P}(\widetilde{H}_k).    
\end{equation}

\begin{remark}\label{remark:application_CLT}
Consider  $\beta(k)=s\sqrt{k}+b_k$, for $s\in\R$ and $b_k=o(\sqrt{k})$ as in Remark~\ref{rem:application_CLT}. Let $G_k(\beta)$ and $H_k(\beta)$ be the sets from~\eqref{eq:G_k_and_H_k}. Using the identities~\eqref{eq:identities}, it follows that $P_k(G_k)\to\Phi_p(s)$ and $P_k(H_k)\to1-\Phi_p(s)$, as $k\to\infty$.   
\end{remark}

\section{Two annealed results}\label{sec:annealed}

We begin this section by proving the necessary convergence results in the non-intersecting setting. These results will then be used to derive the corresponding statements for the intersecting model, which are presented in the next proposition. We let $M_k$ be the sequence of random variables defined in~\eqref{eq:Mk}, and let  $\Phi\colon \R\to(0,1)$ be the cumulative distribution function from~\eqref{eq:cumulant_standard_normal}.

\begin{proposition}\label{prop:convergence_annealed}
Let $p\in(1/2,1)$. Then, as $k\to\infty$:
\begin{enumerate}[(1)]

\item\label{item:conv_0_annealed} If ${N}_k=o(2^{k\cdot H(p)}a^{\sqrt{k}})$ for all $a>0$, then $P_k(M_k=0)\to1$.

\item\label{item:vague_0_annealed} If ${N}_k=\omega(2^{k\cdot H(p)}a^{\sqrt{k}})$ for all $a>0$,  then $ P_k( M_k\ge n)\to 1$ for any $n\ge0$.

\item\label{item:partial_escape_annealed} If $ N_k=\Theta( 2^{k\cdot H(p)}a^{\sqrt{k}})$ for some $a>0$, then,
\begin{itemize}
\item\label{item:partial_escape_1_intersec}$\lim_kP_k({M}_k=0)=\Phi\bigl(-(\log_{p/(1-p)}(a))(p(1-p))^{-1/2}\bigr)$, and
    \item\label{item:partial_escape_2_intersec}$\lim_kP_k( M_k= n)=0$ for any $n\ge1$.
\end{itemize}
\end{enumerate}
\end{proposition}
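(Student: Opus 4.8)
The strategy is to first establish the analogous statements for the non-intersecting model, where the indicators $\1_{\{W=Z^{(j)}\}}$ are conditionally iid given $W$, and then transfer these to the intersecting model by controlling the difference between $M_k$ and $\wt M_k$. For the non-intersecting model, the key observation is the decomposition in~\eqref{eq:distribution_wtM}: conditionally on $\{|W|=i\}$, the variable $\wt M_k$ is $\Bin(\wt N_k, p^i(1-p)^{k-i})$. The success probability $p^i(1-p)^{k-i}$ is, up to a Gaussian fluctuation in $i$, of size $2^{-kH(p)}$ precisely when $i = pk + O(\sqrt k)$, which is where the bulk of the mass of $|W|\sim\Bin(k,p)$ sits. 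Writing $i = pk + s\sqrt k$, one finds $p^i(1-p)^{k-i} = 2^{-kH(p)}(p/(1-p))^{-s\sqrt k}\cdot(1+o(1))$, so the conditional expected count $\wt N_k \cdot p^i(1-p)^{k-i}$ behaves like $a^{\sqrt k}\cdot((1-p)/p)^{s\sqrt k}$. This is $o(1)$ when $s > \log_{p/(1-p)} a$ and $\to\infty$ when $s < \log_{p/(1-p)} a$. Splitting $|W|$ at the threshold $\beta(k) = (\log_{p/(1-p)} a)\sqrt k$ via the events $\wt G_k, \wt H_k$ of~\eqref{eq:wtG_k_and_wtH_k}, a conditional second-moment / Poisson-approximation argument shows that on $\wt G_k$ the count escapes to infinity (so $\P(\wt M_k \ge n \mid \wt G_k)\to 1$) while on $\wt H_k$ the count is zero with probability $\to 1$ (so $\P(\wt M_k = 0 \mid \wt H_k)\to 1$). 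By Remark~\ref{rem:application_CLT}, $\P(\wt G_k)\to \Phi_p(\log_{p/(1-p)} a)$ and $\P(\wt H_k)\to 1-\Phi_p(\log_{p/(1-p)} a)$. Combining: in case~\ref{item:partial_escape_annealed} one gets $\P(\wt M_k = 0)\to 1 - \Phi_p(\log_{p/(1-p)} a) = \Phi_p(-\log_{p/(1-p)} a) = \Phi(-(\log_{p/(1-p)} a)(p(1-p))^{-1/2})$ (using~\eqref{eq:cumulant_p}), and $\P(\wt M_k = n)\to 0$ for $n\ge 1$ since mass only accumulates at $0$ and at $\infty$; cases~\ref{item:conv_0_annealed} and~\ref{item:vague_0_annealed} follow by taking $a\to 0$ or $a\to\infty$ in the threshold (equivalently, the threshold $\beta(k)/\sqrt k \to \pm\infty$, so $\wt G_k$ or $\wt H_k$ has probability $\to 1$).

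\textbf{Passing to the intersecting model.}
The second-moment estimates must be carried out carefully because in the intersecting model the indicators $I_j(x,\omega)$ for overlapping blocks ($|i-j| < k$) are correlated. However, for a fixed word $\omega$ with $|\omega| = i$, two overlapping occurrences force $\omega$ to have a nontrivial period, and the number of positions $j$ within distance $k$ of a given occurrence contributing a nonzero correlation is $O(k)$; moreover overlapping occurrences require $\omega$ to be periodic, which is a lower-order event. The standard way to handle this (as in the $p=1/2$ case of~\cite{Wei20, AlvBecMer23}) is to compare $M_k$ with the count $\wt M_k$ over a sparsified, non-overlapping set of $\lfloor N_k/k\rfloor$ starting positions, and to show that the contribution of overlapping pairs to both the first and second moments is negligible. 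Concretely, one shows $\E_k[M_k(M_k-1)] - (\E_k M_k)^2 = o(1)$ on the relevant conditioning events $G_k$ or $H_k$ by bounding the overlap terms, and that $\E_k M_k$ and $\E[\wt M_k]$ have the same limiting behaviour; the identities~\eqref{eq:identities} and Remark~\ref{remark:application_CLT} then let us import the probabilities of $G_k, H_k$ directly. On $H_k$ a first-moment (Markov) bound gives $P_k(M_k = 0 \mid H_k)\to 1$; on $G_k$ a second-moment (Paley–Zygmund) bound gives $P_k(M_k \ge n \mid G_k)\to 1$ for each fixed $n$. Assembling these exactly as in the non-intersecting case yields all three items of Proposition~\ref{prop:convergence_annealed}.

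\textbf{Main obstacle.}
The delicate point is the interplay between the Gaussian fluctuation of $|W|$ (scale $\sqrt k$) and the exponential sensitivity of the conditional success probability $p^i(1-p)^{k-i}$ to $i$: one needs the estimate $p^{pk+s\sqrt k}(1-p)^{(1-p)k - s\sqrt k} = 2^{-kH(p)}(p/(1-p))^{-s\sqrt k}(1+o(1))$ to hold uniformly for $s$ in a window that grows slowly, and one must verify that the contribution to $\P(\wt M_k = n)$ from the transitional region $i = pk + \beta(k) + o(\sqrt k)$ — where the conditional mean is $\Theta(1)$ and a genuine Poisson limit would appear — has vanishing total $|W|$-mass, which is why the fine CLT statement of Remark~\ref{rem:application_CLT} (stability under $o(\sqrt k)$ shifts) is needed. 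The other technical nuisance, rather than a genuine obstacle, is the bookkeeping for overlapping occurrences in the intersecting model, but since $p > 1/2$ the word-probabilities are if anything more concentrated than in the uniform case, so the overlap bounds from~\cite{Wei20} go through with only cosmetic changes.
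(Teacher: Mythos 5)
Your overall strategy — condition on the Hamming weight $|W|$, split at a threshold of order $\sqrt k$, use the CLT for the probability of the two halves, use a first-moment bound on one side and a ``escape to infinity'' argument on the other, and pass to the intersecting model by sparsifying to the non-overlapping indicators $Y_k$ — is exactly the paper's approach, and the technical concerns you flag (the transitional region where the conditional mean is $\Theta(1)$, the need for stability of the CLT under $o(\sqrt k)$ shifts) are the right ones. The paper handles the transitional region precisely by using two slightly offset thresholds $\beta(k)=-c\sqrt k\mp k^{1/4}$, and the sparsification trick $Y_k\le M_k$, $Y_k\overset{d}{=}\wt M_k$, is exactly how it avoids second-moment overlap estimates. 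For the other direction it relies on the identity $\P(W=Z^{(j)}\mid\wt G_k)=\E_k[I_j\mid G_k]$ (Remark~\ref{rem:application_intersecting}), which transfers the Markov bound directly without any overlap bookkeeping.

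However, there is a concrete sign error that reverses the roles of $\wt G_k$ and $\wt H_k$ throughout. Writing $i=pk+s\sqrt k$, the exact identity is
\[
p^{i}(1-p)^{k-i}=p^{pk}(1-p)^{(1-p)k}\cdot p^{s\sqrt k}(1-p)^{-s\sqrt k}=2^{-kH(p)}\bigl(p/(1-p)\bigr)^{+s\sqrt k},
\]
not $\bigl(p/(1-p)\bigr)^{-s\sqrt k}$ as you wrote (and it is exact, not merely $(1+o(1))$). Since $p>1/2$ gives $p/(1-p)>1$, words of \emph{higher} Hamming weight have \emph{larger} success probability, so the conditional expected count $\wt N_k\cdot p^i(1-p)^{k-i}=\Theta\bigl((p/(1-p))^{(c+s)\sqrt k}\bigr)$ tends to $0$ when $s<-c$ and to $\infty$ when $s>-c$. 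The correct threshold is therefore $|W|=pk-c\sqrt k$, and the count vanishes on the low-weight side $\wt G_k$ and escapes to infinity on the high-weight side $\wt H_k$ — the opposite of what you claim. As a result your plan to apply Markov on $\wt H_k$ and a second-moment bound on $\wt G_k$ would not go through: on $\wt H_k$ the conditional mean blows up so Markov gives nothing, and on $\wt G_k$ the conditional mean vanishes so Paley--Zygmund gives nothing. The final numerical answer $\Phi_p(-c)$ happens to come out right because of the Gaussian symmetry $1-\Phi_p(c)=\Phi_p(-c)$, but the intermediate argument as written is broken. Fixing the sign (threshold at $pk-c\sqrt k$, Markov on $\wt G_k$, a Binomial tail bound on $\wt H_k$, thresholds offset by $\pm k^{1/4}$) recovers exactly the paper's Lemmas~\ref{lem:useful_result_1} and~\ref{lem:useful_result_2} and the rest follows as you describe.
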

\noindent This is often referred to as the \textit{annealed} version of Theorem~\ref{thm:main_teo}, as~$M_k$ is defined on a coupled probability space. Meanwhile, the \textit{quenched} case corresponds to a $\Ber^\N$-almost sure result, which in our setting is exactly Theorem~\ref{thm:main_teo}, which is proven in Section~\ref{sec:quenched}.

\subsection{Convergence of \texorpdfstring{$\wt M_k$}{}}
\label{subsec:annealed_non-intersecting}
Let $p \in (1/2, 1)$. The sequence $\wt M_k$ from~\eqref{eq:wtM_k} is strongly dependent on the choice of the positive sequence $\wt N_k$. Therefore, finding $\wt N_k$ such that the limiting behaviour of $\wt M_k$ is non-trivial is a central matter.
From~\eqref{eq:exp_Ij}, we find that $\wt M_k$ converges to zero in~$L^1$ if $\wt N_k$ grows slower than 
$(p^2+(1-p)^2)^{-k}$.
This gives us an asymptotic lower bound for the sequences $N_k$ of interest.
Nevertheless, as we will see in Proposition~\ref{prop:conv_0_non-intersecting}, the threshold for this trivial convergence is actually higher.

Given a sequence $a_k>0$, we define for $k\ge1$
\[
\alpha(k)=\sqrt{k}\cdot\log_{p/(1-p)}(a_k),\qquad
\beta(k)\in\R,\qquad
\gamma(k)=(p/(1-p))^{\alpha(k)+\beta(k)}.
\]
Let $\wt G_k(\beta)$ and $\wt H_k(\beta)$ be the events defined as in~\eqref{eq:wtG_k_and_wtH_k}. In some of the upcoming claims, the sequence~$\beta(k)$ will depend on $a_k$, however, Lemmas~\ref{lem:useful_result_1} and~\ref{lem:useful_result_2} work for any~$\beta(k)$.

\begin{lemma}\label{lem:useful_result_1}
Assume there exists $C>0$ such that $\wt N_k\le C 2^{k\cdot H(p)}a_k^{\sqrt{k}}$ for all sufficiently large~$k\ge1$. Then,
$$\P(\wt M_k\neq0|\wt G_k)\le C \gamma(k).$$
\end{lemma}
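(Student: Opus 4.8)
The plan is to bound the conditional probability $\P(\wt M_k\neq 0\mid \wt G_k)$ by a conditional first moment and then evaluate that moment explicitly. Since $\wt M_k$ is $\N\cup\{0\}$-valued, Markov's inequality gives $\P(\wt M_k\neq 0\mid \wt G_k)=\P(\wt M_k\ge 1\mid \wt G_k)\le \E[\wt M_k\mid \wt G_k]$. Because $\wt N_k$ is deterministic and the blocks $Z^{(j)}$ are iid and independent of $W$ — hence independent of the event $\wt G_k$, which is $\sigma(W)$-measurable — linearity of expectation yields $\E[\wt M_k\mid \wt G_k]=\wt N_k\cdot \P\bigl(W=Z^{(1)}\mid \wt G_k\bigr)$.

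Next I would compute $\P(W=Z^{(1)}\mid \wt G_k)$ by further conditioning on the Hamming weight $|W|$. By~\eqref{eq:W=Z}, on the event $\{|W|=i\}$ one has $\P(W=Z^{(1)}\mid |W|=i)=p^{i}(1-p)^{k-i}=(1-p)^k(p/(1-p))^i$. Since $p>1/2$ we have $p/(1-p)>1$, so this is increasing in $i$; on $\wt G_k=\{|W|\le pk+\beta(k)\}$ the (integer) weight satisfies $|W|\le pk+\beta(k)$, whence
$$\P\bigl(W=Z^{(1)}\mid \wt G_k\bigr)\le (1-p)^k(p/(1-p))^{pk+\beta(k)}.$$

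The last step is bookkeeping: using the entropy identity $p^p(1-p)^{1-p}=2^{-H(p)}$, rewrite $(1-p)^k(p/(1-p))^{pk}=\bigl(p^p(1-p)^{1-p}\bigr)^k=2^{-kH(p)}$, so the displayed bound becomes $2^{-kH(p)}(p/(1-p))^{\beta(k)}$. Combining with the hypothesis $\wt N_k\le C\,2^{kH(p)}a_k^{\sqrt k}$ and the identity $a_k^{\sqrt k}=(p/(1-p))^{\sqrt k\log_{p/(1-p)}(a_k)}=(p/(1-p))^{\alpha(k)}$ gives
$$\E[\wt M_k\mid \wt G_k]\le C\,(p/(1-p))^{\alpha(k)+\beta(k)}=C\gamma(k),$$
as claimed. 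I do not expect a serious obstacle here; the points requiring care are the measurability/independence justification for the linearity step (so conditioning on $\wt G_k$ does not interfere with the $Z^{(j)}$), the direction of the monotonicity in $i$ (which crucially uses $p>1/2$), and the degenerate case $pk+\beta(k)<0$ where $\wt G_k=\varnothing$ and the statement is vacuous.
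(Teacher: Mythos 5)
Your proposal is correct and follows essentially the same route as the paper: bound $\P(\wt M_k\neq 0\mid \wt G_k)$ via Markov and linearity by $\wt N_k\cdot\P(W=Z^{(1)}\mid\wt G_k)$, then estimate the conditional probability by the monotonicity of $i\mapsto p^i(1-p)^{k-i}$ on $\wt G_k$, and finally invoke the entropy identity and the definition of $\alpha$ to match $C\gamma(k)$. The minor differences (doing the Markov/linearity step first rather than last, and noting the vacuous case $pk+\beta(k)<0$) are cosmetic.
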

\begin{proof}
Reasoning similarly to~\eqref{eq:exp_Ij} with $G_k(\beta)$ from~\eqref{eq:G_k_and_H_k}, we get for every $k,j\ge1$
\begin{equation}\label{eq:conditioning}
\begin{split}
\P\bigl(W=Z^{(j)},\wt G_k\bigr)&\textstyle=
\sum_{\omega\in  G_k}
\P\bigl(W=Z^{(j)}\bigl|W=\omega\bigr)\P(W=\omega)\\
&\textstyle=\sum_{i=0}^{\lfloor pk+\beta(k)\rfloor}
\binom{k}{i}\bigl(p^{i}(1-p)^{k-i}\bigr)^2,
\end{split}
\end{equation}
under the convention $\binom{k}{i}=0$ for $i>k$.
We have that
$\P(\wt G_k)=
\sum_{i=0}^{\lfloor pk+\beta(k)\rfloor}
\binom{k}{i}p^{i}(1-p)^{k-i}.
$
Since the map $x\mapsto p^x(1-p)^{k-x}$ is increasing,
\[
\P\bigl(W=Z^{(j)}\bigl|\wt G_k\bigr)=
\frac{\P\bigl(W=Z^{(j)},\wt G_k\bigr)}{\P(\wt G_k)}\le 
p^{pk+\beta(k)}(1-p)^{k-pk-\beta(k)}.
\]
By the hypothesis,
\begin{equation}\label{eq:calculation_N_k}
\wt N_k\le C2^{k\cdot H(p)}a_k^{\sqrt{k}}=C(p^p(1-p)^{1-p})^{-k}(p/(1-p))^{\alpha(k)},    
\end{equation}
and a calculation yields that
\begin{equation*}\label{eq:N_k_times}
\wt{N}_k\cdot \P\bigl(W=Z^{(1)}\bigl|\wt G_k\bigr)\le
C(p/(1-p))^{\alpha(k)+\beta(k)}
=C\gamma(k).
\end{equation*}
Therefore,
\begin{align*}\textstyle
 \E\bigl[\wt M_k\bigl|\wt G_k\bigr]&\textstyle=
\sum_{j=1}^{\wt N_k}\P\bigl(W=Z^{(j)}\bigl|\wt G_k\bigr)
\textstyle=
\wt N_k\cdot \P\bigl(W=Z^{(1)}\bigl|\wt G_k\bigr)
\textstyle\le C\gamma(k).   
\end{align*}
We conclude by means of Markov's inequality:
\[
\P(\wt M_k\neq0|\wt G_k)=\P\bigl(\wt M_k\ge1\bigl|\wt G_k\bigr)\le  \E\bigl[\wt M_k\bigl|\wt G_k\bigr]\le C\gamma(k).\tag*{\qedhere}
\]
\end{proof}

\begin{lemma}\label{lem:useful_result_2}
Assume there exists $C_1>0$ such that $\wt N_k\ge C_1 2^{k\cdot H(p)}a_k^{\sqrt{k}}$ for all~$k\ge1$ sufficiently large.  Then, for any~$n\ge0$ there is $C_2>0$ such that
$$\P\bigl(\wt M_k=n|\wt H_k\bigr)\le C_2 \wt N_k^n \exp\bigl\{-C_1\gamma(k)\}.$$

\end{lemma}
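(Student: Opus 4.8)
The plan is to condition on the value of the random word $W$ and exploit the independence of the blocks $Z^{(j)}$ in the non-intersecting model. Conditioned on $W=\omega$ with $\omega\in H_k$, the variable $\wt M_k$ is a sum of $\wt N_k$ independent indicators, each equal to $1$ with probability $q_\omega:=\Ber^k(\omega)=p^{|\omega|}(1-p)^{k-|\omega|}$, so $\wt M_k\mid\{W=\omega\}\sim\Bin(\wt N_k,q_\omega)$. Hence
\[
\P(\wt M_k=n\mid W=\omega)=\binom{\wt N_k}{n}q_\omega^{\,n}(1-q_\omega)^{\wt N_k-n}\le \wt N_k^{\,n}\,(1-q_\omega)^{\wt N_k-n}\le \wt N_k^{\,n}\,e^{-q_\omega(\wt N_k-n)},
\]
using $\binom{\wt N_k}{n}\le \wt N_k^{\,n}$ and $1-t\le e^{-t}$. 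The key monotonicity observation (as in Lemma~\ref{lem:useful_result_1}, but now in the opposite direction) is that on $\wt H_k$ we have $|\omega|> pk+\beta(k)$, and since $x\mapsto p^x(1-p)^{k-x}$ is increasing for $p>1/2$, it follows that $q_\omega\ge p^{pk+\beta(k)}(1-p)^{k-pk-\beta(k)}=(p^p(1-p)^{1-p})^{k}(p/(1-p))^{\beta(k)}$ for every $\omega\in H_k$.

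Next I would feed in the lower bound on $\wt N_k$. By hypothesis $\wt N_k\ge C_1 2^{k\cdot H(p)}a_k^{\sqrt k}=C_1(p^p(1-p)^{1-p})^{-k}(p/(1-p))^{\alpha(k)}$ for all large $k$, exactly as in~\eqref{eq:calculation_N_k}. Multiplying the lower bound for $q_\omega$ by this lower bound for $\wt N_k$ gives, for every $\omega\in H_k$ and all large $k$,
\[
q_\omega\cdot \wt N_k\ge C_1\,(p/(1-p))^{\alpha(k)+\beta(k)}=C_1\gamma(k).
\]
Therefore $q_\omega(\wt N_k-n)\ge C_1\gamma(k)-n q_\omega\ge C_1\gamma(k)-n$ (since $q_\omega\le1$), so $e^{-q_\omega(\wt N_k-n)}\le e^{n}e^{-C_1\gamma(k)}$. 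Plugging this into the displayed bound yields $\P(\wt M_k=n\mid W=\omega)\le e^{n}\wt N_k^{\,n}\exp\{-C_1\gamma(k)\}$ uniformly over $\omega\in H_k$. Averaging over $\omega$ with respect to the conditional law of $W$ given $\wt H_k$ (which is supported on $H_k$ by the identities in~\eqref{eq:identities}) preserves the bound, and setting $C_2:=e^{n}$ gives
\[
\P(\wt M_k=n\mid\wt H_k)=\sum_{\omega\in H_k}\P(\wt M_k=n\mid W=\omega)\,\P(W=\omega\mid\wt H_k)\le C_2\,\wt N_k^{\,n}\exp\{-C_1\gamma(k)\},
\]
as claimed.

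I do not expect a serious obstacle here: the proof is essentially the "mirror image" of Lemma~\ref{lem:useful_result_1}, trading Markov's inequality for the elementary binomial tail estimate $\binom{\wt N_k}{n}q^n(1-q)^{\wt N_k-n}\le \wt N_k^n e^{-q(\wt N_k-n)}$ and using the increasing monotonicity of $x\mapsto p^x(1-p)^{k-x}$ in the reverse direction. The only point requiring mild care is the handling of the $-n$ in the exponent $\wt N_k-n$: for it to be harmless one wants $\wt N_k\ge n$ for large $k$ (automatic, since $\wt N_k\to\infty$) and the crude bound $nq_\omega\le n$; alternatively one can absorb the factor $e^{n}$ into the constant $C_2$, which is allowed since $C_2$ is permitted to depend on $n$. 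A second routine check is that conditioning on $\wt H_k$ genuinely restricts $W$ to take values in $H_k$, so that the uniform-over-$H_k$ bound survives the averaging step; this is immediate from the definitions in~\eqref{eq:wtG_k_and_wtH_k} and~\eqref{eq:identities}.
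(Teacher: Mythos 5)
Your proof is correct and follows essentially the same route as the paper's: condition on the word so that $\wt M_k$ is binomial, use the monotonicity of $x\mapsto p^x(1-p)^{k-x}$ (for $p>1/2$) to bound the success probability from below on $H_k$, apply $(1-t)^m\le e^{-tm}$, and absorb the harmless $e^n$ factor from the $-n$ in the exponent into $C_2$. The only superficial difference is that you condition on the full word $W=\omega$, whereas the paper groups terms by Hamming weight $i$ and writes the sum explicitly; the estimates are the same.
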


\begin{proof}

For a fixed $n\ge0$ we see, similarly to~\eqref{eq:distribution_wtM}, that 
\begin{align*}\label{eq:Jon_k}\textstyle
\P(\wt M_k=n,\wt H_k)\le\binom{\wt{N}_k}{n}
\sum_{i=\lceil pk+\beta(k)\rceil}^{k}\binom{k}{i}
(p^{i}(1-p)^{k-i})^{n+1}(1-p^{i}(1-p)^{k-i})^{\wt{N}_k-n}.
\end{align*}
If $i$ lies in the range of the sum, then
\begin{equation*}
(1-p^{i}(1-p)^{k-i})^{\wt{N}_k-n}\le 
(1-p^{pk+\beta(k)}(1-p)^{k-pk-\beta(k)})^{\wt{N}_k-n}=d_k.
\end{equation*}
We have that 
$\P(\wt H_k)\le
\sum_{i=\ceil{pk+\beta(k)}}^k
\binom{k}{i}p^{i}(1-p)^{k-i}$. Using that $(p^{i}(1-p)^{k-i})^{n+1}\le p^{i}(1-p)^{k-i}$,
\[
\P\bigl(\wt M_k=n\mid\wt H_k\bigr)=
\frac{\P\bigl(\wt M_k=n,\wt H_k\bigr)}{\P(\wt H_k)}\le 
\binom{\wt N_k}{n}d_k.
\]

By our hypothesis and Equation~\eqref{eq:calculation_N_k}, a calculation yields that
\[
\wt{N}_k\cdot p^{pk+\beta(k)}(1-p)^{k-pk-\beta(k)}\ge
C_1(p/(1-p))^{\alpha(k)+\beta(k)}=
C_1\gamma(k).
\]

Therefore, $d_k=O(\exp\{-C_1\gamma(k)\})$.
By $\binom{\wt N_k}{n}\le \frac{\wt N_k^n}{n!}$, we conclude
\[
\P(\wt M_k=n|\wt H_k)=O( \wt{N}_k^n\cdot 
d_k)
=O\bigl(\wt N_k^n \exp\bigl\{-C_1\gamma(k)\}\bigr),
\]
as desired.
\end{proof}

\begin{remark}\label{rem:application_intersecting}
Lemma~\ref{lem:useful_result_1} remains valid if we replace $\wt M_k$ with~$M_k$. This follows from the key identity $\P(W = Z^{(j)} \mid \wt G_k) = \E_k[I_j \mid  G_k]$, where~$I_j$ is the indicator defined in~\eqref{eq:Indicator}, and $G_k$ is from~\eqref{eq:G_k_and_H_k}. With this, the proof carries through in the same way. On the other hand, Lemma~\ref{lem:useful_result_2} cannot be similarly adapted for $M_k$ and $H_k$, as its proof relies on the explicit distribution of the random variable~$\wt M_k$.
\end{remark}

\begin{proposition}\label{prop:conv_0_non-intersecting} If $\wt{N}_k=o\bigl(2^{k\cdot H(p)}a^{\sqrt{k}}\bigr)$ for all $a>0$, then $\P(\wt M_k=0)\to1$.
\end{proposition}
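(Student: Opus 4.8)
The plan is to split the event $\{\wt M_k \ne 0\}$ according to the Hamming weight of the random word $W$, using the decomposition $\{0,1\}^\N$ (rather, the probability space) into $\wt G_k(\beta)$ and $\wt H_k(\beta)$ for a carefully chosen threshold sequence $\beta(k)$. Writing $\P(\wt M_k \ne 0) \le \P(\wt M_k \ne 0 \mid \wt G_k) + \P(\wt H_k)$, we want both terms to vanish. Lemma~\ref{lem:useful_result_1} controls the first term by $C\gamma(k) = C(p/(1-p))^{\alpha(k)+\beta(k)}$, while the CLT (Remark~\ref{rem:application_CLT}) controls the second: if $\beta(k) = s\sqrt{k} + o(\sqrt{k})$ then $\P(\wt H_k) \to 1 - \Phi_p(s)$.

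The key is to choose $\beta$ so that $\gamma(k) \to 0$ while simultaneously $\P(\wt H_k)\to 0$, i.e.\ $\beta(k)/\sqrt{k} \to +\infty$. First I would extract from the hypothesis $\wt N_k = o(2^{kH(p)}a^{\sqrt k})$ (for every $a>0$) a sequence $a_k \to 1^+$, decaying slowly enough that still $\wt N_k \le C\, 2^{kH(p)} a_k^{\sqrt k}$ for large $k$ — concretely one picks $a_k = (p/(1-p))^{\alpha(k)/\sqrt k}$ where $\alpha(k) := \log_{p/(1-p)}\!\big(\wt N_k / (p^p(1-p)^{1-p})^{-k}\big)$, so that in fact $\wt N_k = (p^p(1-p)^{1-p})^{-k}(p/(1-p))^{\alpha(k)}$ exactly (compare~\eqref{eq:calculation_N_k}), and the hypothesis forces $\alpha(k)/\sqrt k \to -\infty$. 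Then $\gamma(k) = (p/(1-p))^{\alpha(k)+\beta(k)}$, and since $p/(1-p) > 1$, to make $\gamma(k)\to 0$ I need $\alpha(k)+\beta(k)\to-\infty$. Choosing, say, $\beta(k) = -\tfrac12\alpha(k)$ gives $\alpha(k)+\beta(k) = \tfrac12\alpha(k)\to-\infty$, so $\gamma(k)\to 0$; and since $\alpha(k)/\sqrt k \to -\infty$, we get $\beta(k)/\sqrt k \to +\infty$, whence $\P(\wt H_k)\to 0$ by the CLT estimate (the Gaussian tail at $+\infty$).

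Assembling: with this $\beta$, Lemma~\ref{lem:useful_result_1} gives $\P(\wt M_k \ne 0 \mid \wt G_k) \le C\gamma(k) \to 0$, and the CLT gives $\P(\wt H_k) \to 0$, so $\P(\wt M_k \ne 0) \to 0$, i.e.\ $\P(\wt M_k = 0) \to 1$. The main obstacle — really the only subtle point — is the bookkeeping that converts the "for all $a > 0$" quantifier in the hypothesis into a single usable sequence $a_k$ (equivalently $\alpha(k)$ with $\alpha(k)/\sqrt k \to -\infty$) that is still compatible with the hypothesis of Lemma~\ref{lem:useful_result_1}; once that diagonalization is set up, choosing $\beta$ to balance the two error terms is immediate, and one should double-check that the $o(\sqrt k)$ slack in Remark~\ref{rem:application_CLT} is not needed here since $\beta(k)/\sqrt k$ genuinely diverges, making the tail bound $\P(\wt H_k)\to 0$ robust.
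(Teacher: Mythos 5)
Your proof is correct and follows essentially the same route as the paper's: write $\wt N_k = 2^{kH(p)}a_k^{\sqrt k}$ (equivalently, introduce $\alpha(k)=\sqrt{k}\log_{p/(1-p)}a_k$ with $\alpha(k)/\sqrt{k}\to-\infty$), take $\beta(k)=-\alpha(k)/2$ so that Lemma~\ref{lem:useful_result_1} gives $\P(\wt M_k\ne 0\mid\wt G_k)\le C\gamma(k)\to 0$ while the CLT gives $\P(\wt G_k)\to 1$, and conclude. The only slip is cosmetic: you wrote ``$a_k\to 1^+$'' where the definition you then give (and the conclusion $\alpha(k)/\sqrt{k}\to-\infty$) shows $a_k\to 0^+$, which is what the paper uses.
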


\begin{proof}
Let $a_k\to 0$ be a positive sequence such that $\wt N_k=\lfloor 2^{k\cdot H(p)}a_k^{\sqrt{k}}\rfloor $. Define for $k\ge1$ 
\begin{equation}\label{eq:abc}
\alpha(k)=\sqrt{k}\cdot\log_{p/(1-p)}(a_k),\qquad\beta(k)=-\alpha(k)/2,\qquad
\gamma(k)=(p/(1-p))^{\alpha(k)/2}.    
\end{equation}
We define $\wt G_k(\beta)$ as in~\eqref{eq:wtG_k_and_wtH_k}. Note that  $\gamma(k)\to0$ and $\P(\wt G_k)\to1$ by the CLT.
Applying Lemma~\ref{lem:useful_result_1}, 
\[
\P(\wt M_k\neq0|\wt G_k)\le 2\gamma(k)\to 0.
\]
It follows that $\P(\wt M_k\neq0)\to 0$, which proves our statement.
\end{proof}

\begin{proposition}\label{prop:vague_0_non-intersecting}
If $\wt{N}_k=\omega\bigl(2^{k\cdot H(p)}a^{\sqrt{k}}\bigr)$ for all $a>0$,  then $\P(\wt M_k\ge n)\to 1$ for any $n\ge0$.
\end{proposition}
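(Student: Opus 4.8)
The plan is to mirror the structure of Proposition~\ref{prop:conv_0_non-intersecting}, but now exploiting the \emph{upper} tail of $|W|$ through the event $\wt H_k$ together with Lemma~\ref{lem:useful_result_2}. Concretely, since the hypothesis $\wt N_k=\omega(2^{k\cdot H(p)}a^{\sqrt k})$ for all $a>0$ means $\wt N_k$ grows faster than any such sequence, I would write $\wt N_k=\lceil 2^{k\cdot H(p)}a_k^{\sqrt k}\rceil$ for a positive sequence $a_k\to\infty$ (this is where the ``for all $a>0$'' quantifier is used: given any fixed $a$, eventually $a_k\ge a$, so $\wt N_k$ dominates $2^{kH(p)}a^{\sqrt k}$, and conversely any $\wt N_k$ with the stated growth can be bracketed from below by such a sequence after passing to the relevant $a_k$). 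Then set, exactly as in~\eqref{eq:abc} but with the sign of $\beta$ flipped,
\[
\alpha(k)=\sqrt k\cdot\log_{p/(1-p)}(a_k),\qquad
\beta(k)=-\alpha(k)/2,\qquad
\gamma(k)=(p/(1-p))^{\alpha(k)/2}.
\]
Now $a_k\to\infty$ forces $\alpha(k)\to\infty$, hence $\gamma(k)\to\infty$, while $\beta(k)=-\alpha(k)/2=-\tfrac12\sqrt k\log_{p/(1-p)}a_k$; I would note that to invoke the CLT via Remark~\ref{rem:application_CLT} one wants $\beta(k)=o(\sqrt k)$, i.e.\ $\log a_k=o(1)$, so one should in fact choose $a_k\to\infty$ \emph{slowly} (e.g.\ $a_k=\log k$, or more conservatively any $a_k$ with $a_k\to\infty$ and $\log a_k=o(1)$ — here I would simply take $a_k$ with $\log_{p/(1-p)} a_k = o(1)$, which still tends to infinity, e.g.\ $a_k = (p/(1-p))^{1/\log k}$). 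With such a choice, $\alpha(k)=\sqrt k\cdot o(1)=o(\sqrt k)$, so $\beta(k)=o(\sqrt k)$ and Remark~\ref{rem:application_CLT} (with $s=0$) gives $\P(\wt H_k(\beta))\to 1-\Phi_p(0)=1/2$; meanwhile $\gamma(k)=(p/(1-p))^{\alpha(k)/2}\to\infty$ still holds as long as $\alpha(k)\to\infty$, which we retain. I should double-check the arithmetic that $\alpha(k)\to\infty$ is compatible with $\alpha(k)=o(\sqrt k)$: yes, e.g.\ $\alpha(k)=\sqrt k/\log k\to\infty$ and is $o(\sqrt k)$, so take $a_k=(p/(1-p))^{1/\log k}$.

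Next I would apply Lemma~\ref{lem:useful_result_2}: its hypothesis $\wt N_k\ge C_1 2^{k\cdot H(p)}a_k^{\sqrt k}$ holds with $C_1=1$ (indeed with the ceiling it holds with room to spare), so for each fixed $n\ge0$ there is $C_2>0$ with
\[
\P(\wt M_k=n\mid \wt H_k)\le C_2\,\wt N_k^{\,n}\exp\{-\gamma(k)\}.
\]
The point is that $\gamma(k)=(p/(1-p))^{\alpha(k)/2}$ grows faster than any polynomial in $\wt N_k$: since $\log \wt N_k \asymp k$ (as $\wt N_k \le 2^{k H(p)} a_k^{\sqrt k}\cdot 2$ and $\log a_k^{\sqrt k}=\alpha(k)\log(p/(1-p))=o(\sqrt k)=o(k)$, so $\log \wt N_k = k H(p)\log 2 + o(k)$), while $\log\gamma(k)=\tfrac12\alpha(k)\log(p/(1-p))$. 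Hmm — here $\alpha(k)=o(\sqrt k)$ would make $\log\gamma(k)=o(\sqrt k)$, which is \emph{smaller} than $\log \wt N_k^n\asymp k$, so $\wt N_k^n\exp\{-\gamma(k)\}$ need \emph{not} go to zero with that slow choice. This is the main obstacle, and it shows the two requirements — $\beta(k)=o(\sqrt k)$ for the CLT versus $\gamma(k)$ beating $\wt N_k^n\sim e^{\Theta(k)}$ — are in tension. The resolution is to \emph{not} aim for $\P(\wt H_k)\to1/2$ but instead let $a_k\to\infty$ fast enough that $\gamma(k)=(p/(1-p))^{\alpha(k)/2}$ overwhelms $e^{Ck}$, i.e.\ $\alpha(k)=\omega(k)$, equivalently $\log_{p/(1-p)}a_k=\omega(\sqrt k)$. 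But then $\beta(k)=-\alpha(k)/2$ is far below $-\sqrt k$, so $\P(\wt H_k(\beta))$ — the probability that $|W|>pk+\beta(k)$ with $\beta(k)\to-\infty$ at rate faster than $\sqrt k$ — actually tends to $1$, not $1/2$! That is exactly what we want. So the correct choice is $a_k\to\infty$ with $\log_{p/(1-p)}a_k=\omega(\sqrt k)$ but still $a_k^{\sqrt k}=o(\wt N_k/2^{kH(p)})$ — which is guaranteed because $\wt N_k=\omega(2^{kH(p)}a^{\sqrt k})$ for \emph{all} fixed $a$, and more strongly one can extract $a_k\to\infty$ with $\wt N_k\ge 2^{kH(p)}a_k^{\sqrt k}$; one then further checks $a_k$ can be taken with $\log a_k^{\sqrt k}=\alpha(k)\log(p/(1-p))=o(k)$ so that $\log\wt N_k = kH(p)\log 2+o(k)$ still, whence $\wt N_k^n\exp\{-\gamma(k)\}\to 0$ since $\log\gamma(k)=\tfrac12\alpha(k)\log(p/(1-p))$ and we only need this to beat $n\log\wt N_k=O(k)$: choosing $\alpha(k)$ with $k=o(\alpha(k))$ and $\alpha(k)=o(k/\log(p/(1-p))\cdot\text{anything})$... \textbf{wait} — $k=o(\alpha(k))$ and $\alpha(k)\log(p/(1-p))=o(k)$ are contradictory.

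Let me reconcile. We need $\log\gamma(k)=\tfrac12\alpha(k)\log\tfrac{p}{1-p}$ to dominate $n\log\wt N_k$. Now $\log \wt N_k = kH(p)\log 2 + \alpha(k)\log\tfrac{p}{1-p} + O(1)$, so if $\alpha(k)\log\tfrac{p}{1-p}$ already dominates $k$, then $\log\wt N_k\asymp \alpha(k)$ and $\tfrac12\alpha(k)\log\tfrac{p}{1-p}$ dominates $n\log\wt N_k\asymp n\alpha(k)\log\tfrac{p}{1-p}$ only if $\tfrac12>n$, which fails for $n\ge1$. So polynomial growth in $\wt N_k$ really is a genuine competitor with $\gamma(k)$, and a single event $\wt H_k$ cannot finish the argument for all $n$ this way. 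The right fix, and the step I would actually carry out, is a \emph{localization} of $|W|$ to an interval rather than a half-line: partition the range of $|W|$ above $pk$ into slabs and, on a slab where $|W|\in[pk+\beta_1(k),\,pk+\beta_2(k)]$ with $\beta_2-\beta_1$ small, the conditional probability $\P(W=Z^{(j)}\mid |W|\in\text{slab})$ is pinned to order $p^{pk+\beta(k)}(1-p)^{k-pk-\beta(k)}$; summing $\wt N_k$ independent such indicators the count is asymptotically $\Po(\lambda_k)$ with $\lambda_k=\wt N_k\cdot(\text{that quantity})\to\infty$ for the right slab, giving $\P(\wt M_k\le n\mid \text{slab})\to 0$, while $\P(\text{slab})$ stays bounded below. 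Equivalently — and this is cleaner and is what I would write — I would introduce the auxiliary count $\wt M_k'$ obtained by restricting attention, within each block $Z^{(j)}$, to those blocks whose word has Hamming weight exactly $n_k:=\lfloor pk+\beta(k)\rfloor$ for a suitable $\beta(k)=s\sqrt k$; this is precisely the non-intersecting analogue of $\ddot M_k$ from Section~\ref{sec:cond_Poi_conv}, and the Poisson-convergence machinery developed there (which I am allowed to cite once it is proven) gives $\wt M_k' \xrightarrow{d}\Po(\lambda)$ for the sub-sampled count, hence — choosing the sub-sample size comparable to a $\Theta(1)$ fraction of $\wt N_k$ and $\lambda$ as large as we like, in fact $\lambda\to\infty$ — $\P(\wt M_k\ge \wt M_k'\ge n)\to 1$. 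The main obstacle, then, is not any single estimate but \emph{choosing the right auxiliary restriction}: one must pick $\beta(k)$ and the conditioning so that simultaneously (i) the conditional per-block probability times $\wt N_k$ diverges, forcing the count up, and (ii) the probability of the conditioning event does not vanish; the resolution is that the natural weight $n_k=\lfloor pk+s\sqrt k\rfloor$ has $\Ber^k(F_k)=\Theta(k^{-1/2})$ but conditioning inflates $\wt N_k$'s effective length by $\Theta(k^{1/2})$, and these balance, which is exactly the calculation underpinning Theorem~\ref{thm:quenched_conditional_poisson}. I would therefore defer the bulk of this to a short lemma that invokes the Section~\ref{sec:cond_Poi_conv} results, then conclude by monotonicity $\wt M_k\ge\wt M_k'$.
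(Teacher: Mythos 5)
You set up the proof correctly at first, matching the paper's own choice of $\wt N_k=\lceil 2^{kH(p)}a_k^{\sqrt k}\rceil$ with $a_k\to\infty$ and $\beta(k)=-\alpha(k)/2$, and you eventually realise (after an initial detour about wanting $\P(\wt H_k)\to 1/2$) that what you actually want is $\beta(k)/\sqrt k\to-\infty$ so that $\P(\wt H_k)\to 1$. But then you make a decisive arithmetic slip: you compare $\log\gamma(k)$ with $n\log\wt N_k$, whereas what must be checked for $\wt N_k^n\exp\{-\gamma(k)\}\to 0$ is that $\gamma(k)$ itself dominates $n\log\wt N_k$. Since $\gamma(k)=(p/(1-p))^{\alpha(k)/2}=\exp\bigl\{\tfrac12\alpha(k)\ln\tfrac p{1-p}\bigr\}$ is \emph{exponential} in $\alpha(k)$ while $\log\wt N_k=kH(p)\ln 2+\alpha(k)\ln\tfrac p{1-p}+O(1)$ is merely \emph{linear} in $\max(k,\alpha(k))$, and $a_k\to\infty$ forces $\alpha(k)\ge\delta\sqrt k\gg\log k$ eventually, one has $\gamma(k)/(n\log\wt N_k)\to\infty$ for every fixed $n$. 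The "tension" you identified is therefore illusory. You also do not get to choose $a_k$ as a free parameter; it is determined by $\wt N_k$ via $a_k=(\wt N_k/2^{kH(p)})^{1/\sqrt k}$, so phrases like "let $a_k\to\infty$ fast enough" are misplaced.

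Having manufactured a non-existent obstacle, you then propose a resolution that genuinely does not work. Conditioning on a single Hamming weight, $F_k=\{|\omega|=n_k\}$, has $\Ber^k(F_k)=\Theta(1/\sqrt k)\to 0$ by \eqref{eq:local_CLT}, so $\P(\wt M_k\ge n,\,|W|=n_k)=\Theta(1/\sqrt k)\to 0$ and gives nothing towards $\P(\wt M_k\ge n)\to 1$; your claim in point (ii) that "the probability of the conditioning event does not vanish" is false, and the asserted "balance" with an "inflated effective length" is not a computation one can carry out (conditioning on the law of $W$ does not change $\wt N_k$). Moreover, the auxiliary count $\wt M_k'$ you gesture at is not a pointwise lower bound of $\wt M_k$ — it lives on a different (conditioned) measure — so the intended monotonicity $\wt M_k\ge\wt M_k'$ is not available. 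Finally, Claim~\ref{claim:cond_non_intersecting} and Proposition~\ref{prop:conditional_poisson_annealed} prove Poisson convergence for \emph{fixed} $\lambda$, not $\lambda\to\infty$, so "citing the Section~\ref{sec:cond_Poi_conv} machinery" is not a drop-in move.

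The paper's own proof is much shorter and purely elementary: apply Lemma~\ref{lem:useful_result_2}, note $\P(\wt H_k)\to 1$, and bound $\P(\wt M_k=j\mid\wt H_k)\le C_j\wt N_k^{\,j}\exp\{-\gamma(k)\}$ for $j<n$ using the (in fact automatically satisfied, by the calculation above) inequality $\wt N_k\le\exp\{\gamma(k)/(n+1)\}$; the paper formalises the last step as a truncation $\wt N_k'=\min\bigl(\wt N_k,\exp\{\gamma(k)/(n+1)\}\bigr)$ followed by the trivial monotonicity $\P(\wt M_k\ge n)\ge\P(\wt M_k'\ge n)$ \emph{of counting maps defined from nested index sets}, which is where the monotonicity really belongs.
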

\begin{proof}
Let $a_k\to \infty$ be a positive sequence such that $\wt N_k=\lceil 2^{k\cdot H(p)}a_k^{\sqrt{k}}\rceil$. Define for $k\ge1$
\[
\alpha(k)=\sqrt{k}\cdot\log_{p/(1-p)}(a_k),\qquad\beta(k)=-\alpha(k)/2,\qquad \gamma(k)=(p/(1-p))^{\alpha(k)/2}.
\]
Let us fix $n\ge0$ and let $\wt H_k(\beta)$ be from~\eqref{eq:wtG_k_and_wtH_k}. Note that $\gamma(k)\to\infty$ and $\P(\wt H_k)\to1$ by the CLT.
We additionally assume that $\wt N_k\le \exp\{\gamma(k)/(n+1)\}$.
Applying Lemma~\ref{lem:useful_result_2}, for any $j\le n$ there is $C_j>0$ such that for any $j\le n$
$$P_k\bigl(\wt M_k=j|\wt H_k\bigr)\le C_j \wt N_k^j\exp\{-\gamma(k)\}\to 0.$$
It follows that $\P(\wt M_k= j)\to 0$.
Therefore,  $P(M_k<n)\to 0$ as well, and the statement follows by
\[\textstyle
\P(\wt M_k\ge n)=1-P(\wt M_k<n)\to1.
\]

Let us now drop the additional assumption on $\wt N_k$. Consider a new sequence \[
\wt N_k'=\min\prs{\wt N_k,\exp\crb{\gamma(k)/(n+1)}},
\]
and define from it a new 
\[\textstyle
 \wt{M}_k'=\#\bigl\{1\le j\le \wt N_k':W=Z^{(j)}\bigr\}=
    \sum_{j=1}^{\wt N_k'}\1_{\{W=Z^{(j)}\}},
\] similarly to~\eqref{eq:wtM_k}. Here $Z^{(j)}$, $j\ge1$, are the iid random vectors from Subsection~\ref{subsec:non_intersecting}. By what seen above, $\P(\wt M_k'\ge n)\to 1$. Since $\wt M_k'\le \wt M_k$, it follows that 
\[
\P(\wt M_k\ge n)\ge\P(\wt M_k'\ge n)\to 1,
\]
finishing the proof.
\end{proof}

\begin{proposition}\label{prop:annealed_convergence_non_intersecting}
Let $a>0$. If $ \wt N_k=\Theta( 2^{k\cdot H(p)}a^{\sqrt{k}})$ , then:
\begin{itemize}
    \item\label{item:partial_escape_1}$\lim_k\P(\wt{M}_k=0)=\Phi\bigl(-(\log_{p/(1-p)}(a))(p(1-p))^{-1/2}\bigr)$, and
    \item\label{item:partial_escape_2}$\lim_k\P(\wt M_k= n)=0$ for any $n\ge1$.
\end{itemize}
\end{proposition}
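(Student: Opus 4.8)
\textbf{Proof plan for Proposition~\ref{prop:annealed_convergence_non_intersecting}.}

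The plan is to split $\{0,1\}^k$ into a ``light'' part $G_k$ and a ``heavy'' part $H_k$ at the threshold $|\omega| = pk + \beta(k)$, choosing $\beta(k)$ so that the expected number of hits $\wt N_k \cdot p^{|\omega|}(1-p)^{k-|\omega|}$ is essentially $1$ at the threshold; since $\wt N_k = \Theta(2^{kH(p)}a^{\sqrt k})$ corresponds to $\alpha(k) = \sqrt k \log_{p/(1-p)}a$ being constant-order, one takes $\beta(k) = s\sqrt k$ for a suitable constant $s$ (plus possibly an $o(\sqrt k)$ correction coming from the floor in $\wt N_k$, handled via Remark~\ref{rem:application_CLT}). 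On the light side, Lemma~\ref{lem:useful_result_1} with $a_k \equiv a$ gives $\P(\wt M_k \neq 0 \mid \wt G_k) \le C\gamma(k)$ where $\gamma(k) = (p/(1-p))^{\alpha(k)+\beta(k)}$; choosing $\beta(k) = -\alpha(k) + c(k)$ with $c(k) \to -\infty$ slowly (e.g.\ $c(k) = -\log k$, so still $o(\sqrt k)$) forces $\gamma(k) \to 0$, hence $\P(\wt M_k \neq 0 \mid \wt G_k) \to 0$. On the heavy side, Lemma~\ref{lem:useful_result_2} with the \emph{same} $\beta(k)$ gives $\P(\wt M_k = n \mid \wt H_k) \le C_2 \wt N_k^n \exp\{-C_1\gamma(k)\}$ — but now with this $\beta$ we have $\gamma(k) \to \infty$, and since $\wt N_k$ grows only like $2^{kH(p)}a^{\sqrt k} = 2^{\Theta(k)}$ while $\gamma(k) = (p/(1-p))^{-\log k + o(\sqrt k) \cdot 0}$... wait — here is the subtlety: with $\alpha(k)$ constant-order, $\gamma(k) = (p/(1-p))^{\alpha(k)+\beta(k)} = (p/(1-p))^{\beta(k)+O(1)}$, so to make $\gamma(k)$ beat $\wt N_k^n = 2^{\Theta(kn)}$ I need $\beta(k)$ of order at least $k$, not $\sqrt k$. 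That breaks the CLT scaling. So the correct move is the two-sided/interpolation argument: do \emph{not} use a single $\beta$, but rather bound $\P(\wt M_k = n)$ by conditioning on three regions.

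Concretely, I would fix $n \ge 1$ and pick a slowly growing auxiliary $r(k) \to \infty$ with $r(k) = o(\sqrt k)$ (say $r(k) = \log\log k$), and set $\beta^-(k) = -\alpha(k) - r(k)\sqrt{k}$ and $\beta^+(k) = -\alpha(k) + r(k)\sqrt k$. Decompose
\[
\P(\wt M_k = n) \le \P\bigl(\wt M_k \neq 0,\ |W| \le pk + \beta^-(k)\bigr) + \P\bigl(pk+\beta^-(k) < |W| \le pk + \beta^+(k)\bigr) + \P\bigl(\wt M_k = n,\ |W| > pk+\beta^+(k)\bigr).
\]
The first term is $\le \P(\wt M_k \neq 0 \mid \wt G_k(\beta^-))$, bounded by Lemma~\ref{lem:useful_result_1} by $C\gamma^-(k)$ with $\gamma^-(k) = (p/(1-p))^{\alpha(k)+\beta^-(k)} = (p/(1-p))^{-r(k)\sqrt k} \to 0$. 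The third term is $\le \P(\wt M_k = n \mid \wt H_k(\beta^+))$, bounded by Lemma~\ref{lem:useful_result_2} by $C_2\wt N_k^n \exp\{-C_1\gamma^+(k)\}$ with $\gamma^+(k) = (p/(1-p))^{r(k)\sqrt k}$; since $\gamma^+(k)$ grows faster than any power of $\wt N_k = 2^{\Theta(k)}$ (because $(p/(1-p))^{r(k)\sqrt k} = \exp\{\Theta(r(k)\sqrt k)\}$... hmm, that is \emph{sub}exponential in $k$, so it does \emph{not} beat $2^{\Theta(kn)}$ either). This is the real obstacle and it means the naive Markov/distribution bounds are too weak near the critical window; one must instead use that on $\wt H_k(\beta^+)$ with $\beta^+ = -\alpha(k) + r(k)\sqrt k$ the \emph{conditional mean} $\E[\wt M_k \mid |W| = i]$ for $i$ in this range is itself $\ge \gamma^+(k) \to \infty$, and combine a second-moment / Paley–Zygmund-type lower bound with the fact that $\P(\wt M_k = n)$ for a sum of i.i.d.\ indicators with mean $\to \infty$ tends to $0$. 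I expect the cleanest route is to revisit the exact formula~\eqref{eq:distribution_wtM} and show the dominant contribution to $\P(\wt M_k = n)$ comes only from $i$ with $|i - (pk - \alpha(k))| = o(\sqrt k)$, i.e.\ a window of width $o(\sqrt k)$ around the critical weight, whose $\P(|W| \in \cdot)$-mass $\to 0$ by the CLT (local de Moivre–Laplace), while outside that window either $\wt M_k = 0$ whp (light side) or $\wt M_k \to \infty$ in probability (heavy side), in both cases contributing $o(1)$ to $\P(\wt M_k = n)$.

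For the first bullet, $\lim_k \P(\wt M_k = 0)$, the same decomposition gives matching upper and lower bounds: $\P(\wt M_k = 0) = \P(\wt M_k = 0,\ \wt G_k(\beta^-)) + \P(\text{middle}) + \P(\wt M_k = 0,\ \wt H_k(\beta^+))$. On $\wt H_k(\beta^+)$, $\P(\wt M_k = 0 \mid |W| = i) = (1 - p^i(1-p)^{k-i})^{\wt N_k} \to 0$ uniformly since $\wt N_k p^i(1-p)^{k-i} \ge \gamma^+(k) \to \infty$ there; on $\wt G_k(\beta^-)$, $\P(\wt M_k = 0 \mid |W| = i) \to 1$ uniformly by Lemma~\ref{lem:useful_result_1} (or directly: $\wt N_k p^i(1-p)^{k-i} \le \gamma^-(k) \to 0$). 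Hence $\P(\wt M_k = 0) = \P(\wt G_k(\beta^-)) + o(1)$, and $\P(\wt G_k(\beta^-)) = \P(|W| \le pk - \alpha(k) - r(k)\sqrt k) = \P(|W| \le pk + s\sqrt k + o(\sqrt k))$ where $s = -\log_{p/(1-p)}a \cdot$ (the constant part of $-\alpha(k)/\sqrt k$), since $r(k) = o(\sqrt k)$. By Remark~\ref{rem:application_CLT} (CLT, insensitive to the $o(\sqrt k)$ term), this converges to $\Phi_p(-\log_{p/(1-p)}a) = \Phi\bigl(-(\log_{p/(1-p)}a)(p(1-p))^{-1/2}\bigr)$ by~\eqref{eq:cumulant_p}, which is the claimed limit. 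The main obstacle, as noted, is making the ``heavy side $\Rightarrow \wt M_k \to \infty$'' step rigorous enough to kill $\P(\wt M_k = n)$ for $n \ge 1$ across the critical window; if Lemma~\ref{lem:useful_result_2} as stated is too lossy there, I would prove an auxiliary lemma: if $Y$ is $\mathrm{Bin}(N, q)$ with $Nq \to \infty$, then $\P(Y = n) \to 0$ for each fixed $n$, and apply it conditionally on $|W| = i$ over the heavy range, then average.
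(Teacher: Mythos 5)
Your plan has the right skeleton---the same light/middle/heavy split the paper itself uses---but it contains two linked errors that send you off in search of machinery that is not actually needed. The first is the belief that Lemma~\ref{lem:useful_result_2} is too lossy on the heavy side: to conclude $\wt N_k^n\exp\{-C_1\gamma(k)\}\to 0$ you do \emph{not} need $\gamma(k)$ to dominate $\wt N_k^n$ (you compare $\gamma^+(k)$ directly against $2^{\Theta(kn)}$), you only need $C_1\gamma(k)\gg\log(\wt N_k^n)=n\log\wt N_k=\Theta(nk)$, which is merely polynomial in $k$. Since $\gamma(k)=(p/(1-p))^{\rho(k)}=e^{c'\rho(k)}$ with $c'=\log(p/(1-p))>0$, any exponent $\rho(k)$ that outpaces $\log k$ suffices, and such a $\rho(k)$ can easily remain $o(\sqrt k)$. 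Lemma~\ref{lem:useful_result_2} therefore already kills $\P(\wt M_k=n\mid\wt H_k)$ for every fixed $n\ge 0$; no Paley--Zygmund, second-moment, or auxiliary binomial argument is required.

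The second error is in the choice of cut. With $c=\log_{p/(1-p)}a$, the perturbations of $\beta^{\pm}$ away from $-c\sqrt k$ must simultaneously outpace $\log k$ (so the heavy-side bound closes) and be $o(\sqrt k)$ (so Remark~\ref{rem:application_CLT} gives $\P(\wt G_k(\beta^-))\to\Phi_p(-c)$ and the middle slab has vanishing probability). Your $\beta^{\pm}(k)=-\alpha(k)\pm r(k)\sqrt k$ with $r(k)\to\infty$ fails the second: $r(k)\sqrt k$ is $\omega(\sqrt k)$, so $\P(\wt G_k(\beta^-))\to 0$ rather than to $\Phi_p(-c)$; the later sentence ``since $r(k)=o(\sqrt k)$'' tacitly treats $r(k)\sqrt k$ as $o(\sqrt k)$, which it is not. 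If instead you drop the extra $\sqrt k$ and take the perturbation to be $r(k)=\log\log k$ alone, the CLT side is fine but $\gamma^+(k)=(\log k)^{c'}$ is only polylogarithmic and now fails the first constraint. The paper uses $\beta(k)=-c\sqrt k-k^{1/4}$ and $\widehat\beta(k)=-c\sqrt k+k^{1/4}$: the term $k^{1/4}$ satisfies both constraints, and with that substitution your outline reproduces the paper's proof essentially verbatim.
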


\begin{proof}
By our assumption, there are constants $a,C_1,C_2>0$ such that 
$$C_2\le \wt N_k/(2^{k\cdot H(p)}a^{\sqrt{k}})\le C_1,$$ for all $k$ large enough. 
For $c=\log_{p/(1-p)}(a)$, we define the sequences 
\begin{equation}\label{eq:abc_bis}
\alpha(k)=c\sqrt{k},\qquad\beta(k)=-c\sqrt{k}-k^{1/4},
\qquad\gamma(k)=(p/(1-p))^{-k^{1/4}},
\end{equation}
so $\gamma(k)\to0$. Let $\wt G_k(\beta)$ be the event defined in~\eqref{eq:wtG_k_and_wtH_k}.
By applying Lemma~\ref{lem:useful_result_1}, it follows that
$
\P(\wt M_k\neq0|\wt G_k)\le C_1 \gamma(k)\to0
$, which gives
\begin{equation}\label{eq:intersecting_conv_0}
\P(\wt M_k=0|\wt G_k)\to1.
\end{equation}

On the other hand, let 
\[
\widehat\alpha(k)=\alpha(k)\qquad
\widehat\beta(k)=-c\sqrt{k}+k^{1/4},\qquad
\widehat\gamma(k)=(p/(1-p))^{k^{1/4}},
\]
so $\widehat\gamma(k)\to \infty$. Let $\wt H_k(\widehat\beta)$ be from~\eqref{eq:wtG_k_and_wtH_k}. For a fixed $n\ge0$, Lemma~\ref{lem:useful_result_2} gives that there is $C_3>0$ such that 
\begin{equation}\label{eq:partial_escape_2}\P(\wt M_k=n|\wt H_k)\le C_3\wt N_k^n \exp\bigl\{-C_2\gamma(k)\bigr\}\to0.
\end{equation}

Let $\Phi_p$ be the cumulative distribution function defined in~\eqref{eq:cumulant_p}. Using Remark~\ref{rem:application_CLT}, we get 
$\P(\wt G_k(\beta))\to\Phi_p(-c)$ and $\P(\wt H_k(\widehat\beta))\to 1-\Phi_p(-c)$. By~\eqref{eq:partial_escape_2},
\[\begin{split}
\P(\wt M_k=n)&=\P(\wt M_k=n|\wt G_k(\beta))\P(\wt G_k(\beta))+\P(\wt M_k=n|\wt H_k(\widehat\beta))\P(\wt H_k(\widehat\beta))+o(1)\\
&=\P(\wt M_k=n|\wt G_k(\beta))\P(\wt G_k(\beta))+o(1).
\end{split}
\]
The first point is proven by setting $n=0$ in the above identity and using~\eqref{eq:intersecting_conv_0}. The second point follows in the same way letting $n\ge1$.
\end{proof}

\begin{remark}
In the proof of Proposition~\ref{prop:annealed_convergence_non_intersecting} we define both sequences $\beta$ and $\widehat\beta$ using the summand~$k^{1/4}$. This can be replaced by any $b_k=o(\sqrt{k})$ such that the convergence in~\eqref{eq:partial_escape_2} is satisfied.
\end{remark}

\subsection{Convergence of \texorpdfstring{$M_k$}{}}
\label{subsec:annealed_intersecting}
Let $p \in (1/2, 1)$, and consider the sequence $M_k$ from~\eqref{eq:Mk}, defined in terms of the sequence $\{N_k\}_{k\ge1}$.
We let $\wt N_k=\lfloor N_k/k\rfloor$ and define (on a separate space with probability measure $\P$) a sequence of random variables $\{\wt M_k\}_{k\ge1}$, as in~\eqref{eq:wtM_k}. Using the indicators from~\eqref{eq:Indicator}, we define for any $k\ge1$
\begin{equation}\label{eq:Yk}\textstyle
Y_k=\sum_{j=1}^{\lfloor N_k/k\rfloor}I_{j\cdot k}.    
\end{equation}
It is clear that $Y_k\le M_k$ since it is a sum of a subset of the same indicators. 
Moreover, $Y_k$ is a sum of indicators whose strings do not overlap, and hence are generated independently from one another under $ P_k$. Therefore $Y_k$ and $\wt M_k$ have the same distribution. In the following two proofs, we show that if $N_k$ grows too slow or too fast, then the distribution of $M_k$ has a trivial behaviour at the limit.

\begin{proof}[Proof of Proposition~\ref{prop:convergence_annealed}-\ref{item:conv_0_annealed}]
 We reason similarly to Proposition~\ref{prop:conv_0_non-intersecting}, outlining the main steps.
Denote 
$ N_k=\lfloor 2^{k\cdot H(p)}a_k^{\sqrt{k}}\rfloor $ for some positive sequence $a_k\to 0$. Let  $\alpha,\beta,\gamma$ be as in~\eqref{eq:abc},
and define $ G_k(\beta)$ from~\eqref{eq:G_k_and_H_k}. Note that $\gamma(k)\to0$ and $\P( G_k)\to1$ by the CLT.
By Remark~\ref{rem:application_intersecting}, we can apply  Lemma~\ref{lem:useful_result_1} to $M_k$, getting that
\[
\P(M_k\neq0| G_k)\le \gamma(k)\to 0.
\]
Point~\ref{item:conv_0_annealed} follows.
\end{proof}

\begin{proof}[Proof of Proposition~\ref{prop:convergence_annealed}-\ref{item:vague_0_annealed}]
Let us  assume $N_k=\omega(2^{k\cdot H(p)}a^{\sqrt{k}})$ and consider the sequence $Y_k$ as in~\eqref{eq:Yk}.
For $\wt N_k=\lfloor N_k/k\rfloor$, we let $\wt M_k$ be as in~\eqref{eq:wtM_k}.
Since $k=o(a^{\sqrt{k}})$ for any $a>1$, it follows that {$\wt{N}_k=\omega\bigl(2^{k\cdot H(p)}a^{\sqrt{k}}\bigr)$ for all $a>0$ as well}. 
Hence, Proposition~\ref{prop:vague_0_non-intersecting} yields that 
$ P_k(\wt M_k\ge  n)\to1$ for any $n\ge0$. Since $Y_k\le M_k$,  we conclude the proof of~\ref{item:vague_0_annealed} by
\[
 P_k(M_k\ge n)\ge P_k(Y_k\ge n)= \P(\wt M_k\ge  n)\to1.\tag*{\qedhere}
\]
\end{proof}

Finally, we show that if $N_k$ is chosen in a suitable asymptotic class, then the sequence~$M_k$ displays a limiting atom at zero and a partial escape of mass to infinity.

\begin{proof}[Proof of Proposition~\ref{prop:convergence_annealed}-\ref{item:partial_escape_annealed}]
By assumption, there are constants $a,C_1,C_2>0$ such that 
$$
C_2\le \ N_k/(2^{k\cdot H(p)}a^{\sqrt{k}})\le C_1,
$$
for all sufficiently large $k$. 
Let $c=\log_{p/(1-p)}(a)$, and define the sequences $\alpha,\beta,\gamma$ as in~\eqref{eq:abc_bis}, so that $\gamma(k)\to0$.
Let $ G_k(\beta)$ be the set defined in~\eqref{eq:G_k_and_H_k}.
By Remark~\ref{rem:application_intersecting}, we can apply  Lemma~\ref{lem:useful_result_1} to $M_k$, getting that
$
P_k( M_k\neq0| G_k)\le C_1 \gamma(k)\to0.  
$
Therefore, 
\begin{equation}\label{eq:intersecting_partial_escape_G_k}
P_k( M_k=0| G_k)\to1.
\end{equation}

Let $n\ge0$, and fix $\delta>0$ such that $a-\delta>0$.
Define $\wt N_k=\lfloor N_k/k\rfloor$, and let $\wt M_k$ be from~\eqref{eq:wtG_k_and_wtH_k}.
Note that $\wt N_k\ge C_2 2^{k\cdot H(p)}(a-\delta)^{\sqrt{k}}$ for all sufficiently large $k$. Let $c_\delta=\log_{p/(1-p)}(a-\delta)<c$ and  define the three sequences
\[
\alpha_\delta(k)=c_\delta\sqrt{k}\qquad
\beta_\delta(k)=-c_\delta\sqrt{k}+k^{1/4},\qquad
\gamma_\delta(k)=(p/(1-p))^{k^{1/4}},
\]
so $\gamma_\delta(k)\to\infty$.
Let $\wt H_k(\beta_\delta)$ and $H_k(\beta_\delta)$ be the events  from~\eqref{eq:wtG_k_and_wtH_k} and~\eqref{eq:G_k_and_H_k}, respectively. 
 By Lemma~\ref{lem:useful_result_2}, there exists $C_3>0$ such that 
\[
 \P(\wt M_k=n|\wt H_k(\beta_\delta))\le C_3\wt N_k^n\exp\{-C_2\gamma_\delta(k)\} \to 0.
\]
It follows that $\P(\wt M_k\ge n|\wt H_k(\beta_\delta))\to 1$.

A calculation--similar to the one in Lemma~\ref{lem:useful_result_2}--and identity~\eqref{eq:identities} show that for any $m\ge0$
\begin{align*}\textstyle
\P(\wt M_k=m\mid\wt H_k(\beta_\delta))=P_k(Y_k=m\mid H_k(\beta_\delta)).
\end{align*}
Therefore,
\[
P_k( Y_k\ge n| H_k(\beta_\delta))= \P(\wt M_k\ge n|\wt H_k(\beta_\delta)).
\]
Using the fact that $M_k\ge Y_k$ for $Y_k$ from~\eqref{eq:Yk}, we obtain
\begin{equation}\label{eq:intersecting_partial_escape_wtH_k}
 P_k( M_k\ge n| H_k(\beta_\delta))\to 1.
\end{equation}

Now define the set
\[
E_k^\delta=\{0,1\}^k\setminus(G_k(\beta)\cup H_k(\beta_\delta))
=\crb{\omega:\ -c\sqrt{k}-k^{1/4}<|\omega|-pk\le -c_\delta\sqrt{k}+k^{1/4}},
\] 
For $\Phi_p$ the cumulative distribution function defined in~\eqref{eq:cumulant_p}, Remark~\ref{remark:application_CLT} yields
$$P_k(E_k^\delta)=1-\bigl(P_k(G_k(\beta))+\P(H_k(\beta_\delta))\bigr)\to \Phi_p(-c_\delta)-\Phi_p(-c).$$
Fix $\epsilon>0$ and choose $\delta>0$ such that $P_k(E^\delta_k)\le\epsilon$ for all sufficiently large $k$.  
This $\delta$ exists by the fact that $\Phi_p$ is continuous. 
By~\eqref{eq:intersecting_partial_escape_wtH_k}, we get for any $n\ge0$ 
\begin{equation}\label{eq:total_prob}
\begin{split}
P_k(M_k=n)&=P_k\bigl(M_k=n\mid G_k\bigr) P_k(G_k)+
P_k\bigl(M_k=n\mid E^\delta_k\bigr)P_k(E^\delta_k)+o(1)\\
&\le P_k\bigl(M_k=n\mid G_k\bigr) P_k(G_k)+\epsilon+o(1),
\end{split}
\end{equation}
for all sufficiently large $k$. By~\eqref{eq:intersecting_partial_escape_G_k} and Remark~\ref{remark:application_CLT}, we note that 
$$P_k\bigl(M_k=0\mid G_k\bigr) P_k(G_k)\to\Phi_p(-c).$$ By~\eqref{eq:total_prob} we get $P_k(M_k=0)\to\Phi_p(-c)$, that is the first part of~\ref{item:partial_escape_1_intersec}.
Now let $n\ge1$. Applying~\eqref{eq:intersecting_partial_escape_G_k} to~\eqref{eq:total_prob}, we conclude that $P_k(M_k=n)\to 0$, which finishes the proof.
\end{proof}

\section{Conditional Poisson convergence}\label{sec:cond_Poi_conv}
Let $\wt M_k$ and $M_k$ be the random variables defined in \eqref{eq:wtM_k} and \eqref{eq:Mk} respectively.
As we see in Corollary~\ref{cor:non_Poisson}, for $p\neq1/2$ and any sequence $N_k$, $M_k$ does not converge to the  
Poisson distribution, as happens in the case $p=1/2$ (with $N_k=2^k$).
From the proofs in Section~\ref{sec:annealed} it is also clear why: as we saw in \eqref{eq:W=Z}, the expected number of appearances of a word depends exponentially on the word's Hamming weight (see Definition~\ref{def:HammingWeight}), and so for any choice of $N_k$, most words will appear either too often or too rarely, leading to the result of Proposition~\ref{prop:convergence_annealed}. 

Next, we prove that when this factor is controlled, i.e.\@~when we consider the subset with Hamming weight fixed (depending only on $k$), the limiting distribution \textbf{is} Poisson.

\begin{claim}\label{claim:cond_non_intersecting}
    Let $p\in (0,1)$, $\lambda>0$ and let $m_k$ be a rising sequence in $\N$.
    Define 
    $$A_k=\crb{\omega\in \crb{0,1}^k :\ |\omega|=m_k },\quad\wt N_k=\floor{\frac{\lambda}{p^{m_k}(1-p)^{k-m_k}}}.$$ 
    Then, for all $n\in \N\cup \crb{0}$, $ \P(\wt M_k=n\lvert A_k)\to e^{-\lambda}\frac{\lambda^n}{n!}$.
\end{claim}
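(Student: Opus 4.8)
The plan is to show that, conditioned on $A_k$, the random variable $\wt M_k$ is a sum of $\wt N_k$ i.i.d. indicators each of success probability $p^{m_k}(1-p)^{k-m_k}$, and then invoke the standard Poisson limit theorem (law of rare events). First I would observe that, under the conditioning $\{W=\omega\}$ for a fixed $\omega$, the events $\{W=Z^{(j)}\}$ are independent across $j$ with $\P(W=Z^{(j)}\mid W=\omega)=\Ber^k(\omega)=p^{|\omega|}(1-p)^{k-|\omega|}$ by \eqref{eq:W=Z}. The crucial point is that when $\omega\in A_k$, i.e. $|\omega|=m_k$, this probability equals $p^{m_k}(1-p)^{k-m_k}$ regardless of which $\omega\in A_k$ we picked. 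Hence the conditional law of $\wt M_k$ given $\{W=\omega\}$ is the same $\Bin(\wt N_k,\,p^{m_k}(1-p)^{k-m_k})$ distribution for every $\omega\in A_k$, and therefore the conditional law of $\wt M_k$ given $A_k$ is exactly this binomial as well (averaging identical distributions).

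Next I would set $q_k=p^{m_k}(1-p)^{k-m_k}$ and check the hypotheses of the Poisson approximation: we have $\wt N_k q_k = \floor{\lambda/q_k}\, q_k \to \lambda$ since $q_k\to 0$ (this uses $m_k\to\infty$, or rather that $q_k\to0$, which holds because $m_k$ is rising and $p\in(0,1)$: if $p>1/2$ then $q_k\le p^{m_k}\to0$, and more carefully $q_k\le \max(p,1-p)^{?}$ — actually one just needs $q_k\to 0$, which follows from $q_k \le \max(p,1-p)\cdot q_{k-1}$ being eventually contractive, or simply $q_k\le (\max(p,1-p))^{k}$ type bounds; in any case $\wt N_k\to\infty$ and $\wt N_k q_k\to\lambda$). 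Then $\Bin(\wt N_k,q_k)\xrightarrow{d}\Po(\lambda)$ by the classical theorem, which gives $\P(\wt M_k=n\mid A_k)\to e^{-\lambda}\lambda^n/n!$ for each fixed $n$. One can also give a direct computation:
\[
\P(\wt M_k=n\mid A_k)=\binom{\wt N_k}{n}q_k^n(1-q_k)^{\wt N_k-n},
\]
and then $\binom{\wt N_k}{n}q_k^n\sim (\wt N_k q_k)^n/n!\to \lambda^n/n!$ while $(1-q_k)^{\wt N_k-n}=\exp\{(\wt N_k-n)\log(1-q_k)\}\to e^{-\lambda}$, using $\wt N_k q_k\to\lambda$ and $q_k\to0$.

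I do not expect a serious obstacle here; the statement is essentially a repackaging of the Poisson limit theorem once one notices that the conditioning makes the success probability constant across $\omega\in A_k$. The only mildly delicate points are (i) justifying $q_k\to 0$ so that $\wt N_k\to\infty$ and the floor in $\wt N_k=\floor{\lambda/q_k}$ is negligible — this is where $m_k$ rising is used, together with $p\in(0,1)$, to ensure $q_k$ does not stay bounded away from $0$; and (ii) being careful that the averaging step is legitimate, i.e. $\P(\cdot\mid A_k)=\sum_{\omega\in A_k}\P(\cdot\mid W=\omega)\,\P(W=\omega\mid A_k)$ and each summand is the same binomial, so the convex combination is that binomial. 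Both are routine, so the proof is short.
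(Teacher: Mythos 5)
Your proof is correct and follows essentially the same route as the paper's (very terse) argument: identify $\wt M_k$ conditionally on $A_k$ as $\Bin(\wt N_k, q_k)$ with $q_k=p^{m_k}(1-p)^{k-m_k}$, check $\wt N_k q_k\to\lambda$, and invoke the Poisson limit theorem. The only place you hesitate --- justifying $q_k\to0$ --- is in fact immediate without any appeal to $m_k$ being rising, since $p^{m}(1-p)^{k-m}\le\max(p,1-p)^{k}\to0$ for every $0\le m\le k$.
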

\noindent This is apparent--$\wt M_k$ is a sum of i.i.d indicators, and by~\eqref{eq:W=Z}
\[
\E[\wt M_k\mid W\in A_k]=\wt N_k\cdot \P(W=Z^{(1)}\mid W\in A_k)
=\wt N_k\cdot p^{m_k}(1-p)^{k-m_k}\to \lambda.
\]
The claim follows by the Poisson limit theorem.

Recall from \eqref{eq:F_k} that for $c\in \R$ and  $n_k=\lfloor pk-c\sqrt{k}\rfloor$, 
$$F_k:=\bigl\{\omega\in\{0,1\}^k:|\omega|=n_k\bigr\}.$$
The local De Moivre-Laplace formula~\cite[Theorem 3.1.2]{Dur19}\footnote{\cite[Theorem 3.1.2]{Dur19} states a version of the local De Moivre-Laplace formula for a random variable $Y$ with Rademacher distribution: $\P(Y=1)=p$, $\P(Y=-1)=1-p$. Formula~\eqref{eq:local_DM-L} can be derived from it via the standard relation $Y\overset{d}{=}2X-1$, where $X\sim\Ber(p)$.}
yields that, for any $n\ge0$,
\begin{equation}\label{eq:local_DM-L}
\Ber^k(\omega:|\omega|=n) \sim (2\pi k p(1-p))^{-1/2} \exp\bigl\{-(n-pk)^2/(2 k p(1-p))\bigr\},
\end{equation}
as $k\to\infty$.   It follows that 
\begin{equation}\label{eq:local_CLT}
\Ber^k(F_k)=\Theta(1/\sqrt{k}).
\end{equation}

Recall the probability measure $\ddot P_k=P_k(\cdot|F_k)$, and the sequence $\ddot M_k$ as defined in Subsection~\ref{subsec:intersecting}.
We denote by $\ddot\E_k$ the expectation according to $\ddot P_k$. For $i\ge1$ and $I_i$ from~\eqref{eq:Indicator}, we denote 
\begin{equation}\label{def:q}
    q=\ddot \E_k[I_i]=p^{n_k}(1-p)^{k-n_k},
\end{equation}
which is independent of $i$.

The aim of this section is to prove an analogous result  to Claim~\ref{claim:cond_non_intersecting} in the intersecting case-- the annealed version of Theorem~\ref{thm:quenched_conditional_poisson}.
\begin{proposition}\label{prop:conditional_poisson_annealed}
Let $p\in (0,1)$, $\lambda>0$, and let 
$N_k=\floor{\lambda(p^{n_k}(1-p)^{k-n_k})}=\floor{\lambda/q}$. 
Then $\ddot M_k\xrightarrow{d} \Po(\lambda)$.
\end{proposition}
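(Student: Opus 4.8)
The plan is to apply the Chen--Stein method for Poisson approximation. I would work on $(\Omega_k,\ddot P_k)$ with index set $\Gamma_k=\{1,\dots,N_k\}$, the indicators $I_j$ from~\eqref{eq:Indicator}, and $\ddot M_k=\sum_{j\in\Gamma_k}I_j$. Then $\lambda_k:=\ddot\E_k[\ddot M_k]=N_k q$ with $q$ as in~\eqref{def:q}, and since $N_k=\floor{\lambda/q}$ one has $\lambda_k\in(\lambda-q,\lambda]$, so $\lambda_k\to\lambda$; it therefore suffices to show that the total variation distance between $\ddot M_k$ and $\Po(\lambda_k)$ tends to $0$. For each $j$ set $B_j=\{i\in\Gamma_k:|i-j|\le k-1\}$, the set of indices whose length-$k$ window intersects the window of $j$. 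By the Chen--Stein inequality it is enough to prove that the three quantities
\[
b_1=\sum_{j\in\Gamma_k}\sum_{i\in B_j}\ddot\E_k[I_i]\,\ddot\E_k[I_j],\qquad
b_2=\sum_{j\in\Gamma_k}\sum_{i\in B_j\setminus\{j\}}\ddot\E_k[I_iI_j],\qquad
b_3=\sum_{j\in\Gamma_k}\ddot\E_k\bigl|\ddot\E_k[I_j-q\mid\sigma(I_i:i\notin B_j)]\bigr|
\]
all tend to $0$ as $k\to\infty$.

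The term $b_3$ vanishes identically. Indeed, under $\ddot P_k$ the word $\omega$ has law $\nu^k$ (supported on $F_k$), and conditionally on $\omega$ the sequence $x$ is $\Ber^\N$-distributed, so $I_j$ is a function of $x$ restricted to the block $[j,j+k-1]$ alone. If $i\notin B_j$ then the blocks $[i,i+k-1]$ and $[j,j+k-1]$ are disjoint, hence conditionally on $\omega$ the variable $I_j$ is independent of $\sigma(I_i:i\notin B_j)$; moreover $\ddot\E_k[I_j\mid\omega]=\Ber^k(\omega)=q$ for every $\omega\in F_k$ by~\eqref{eq:mu^k(omega)}. Taking conditional expectations gives $\ddot\E_k[I_j\mid\sigma(I_i:i\notin B_j)]=q$ almost surely, so every summand of $b_3$ is zero. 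For $b_1$, each inner sum has at most $2k-1$ terms, each equal to $q^2$, so $b_1\le 2kN_kq^2\le 2k\lambda q$; from~\eqref{eq:mu^k(omega)} and $n_k=pk-c\sqrt k+O(1)$ one gets $q=2^{-kH(p)+O(\sqrt k)}$, hence $kq\to0$ and $b_1\to0$.

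The crux is $b_2\to0$. Fix $i\ne j$ with $d:=|i-j|\in\{1,\dots,k-1\}$, say $j>i$. Conditionally on $\omega$, the event $\{I_i=I_j=1\}$ requires $x$ on $[i,j+k-1]$ to coincide with the unique $d$-periodic extension of $\omega$ to length $k+d$; in particular it is impossible unless $\omega_\ell=\omega_{\ell+d}$ for all $1\le\ell\le k-d$, and when this holds its conditional probability equals $\Ber^k(\omega)\,\Ber^d(\omega_{k-d+1}\cdots\omega_k)\le q\,p^{\,d}$. Writing $N_d$ for the number of words in $F_k$ satisfying $\omega_\ell=\omega_{\ell+d}$ for $1\le\ell\le k-d$, and using $\nu^k(\omega)=q/\Ber^k(F_k)$ on $F_k$ together with $\Ber^k(F_k)=\binom{k}{n_k}q$, I get $\ddot\E_k[I_iI_j]\le q\,p^{\,d}\,N_d/\binom{k}{n_k}$. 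A $d$-periodic word of length $k$ with $d<k$ is determined by its first $d$ coordinates, so $N_d\le\min\!\bigl(2^d,\binom{k}{n_k}\bigr)$. Summing over the (at most two) $j\in B_i\setminus\{i\}$ at distance $d$, then over $d\in\{1,\dots,k-1\}$ and over $i\in\Gamma_k$, and using $N_kq\le\lambda$, I obtain
\[
b_2\;\le\;2\lambda\sum_{d=1}^{k-1}p^{\,d}\,\min\!\Bigl(\tfrac{2^d}{\binom{k}{n_k}},\,1\Bigr).
\]
By~\eqref{eq:local_CLT} and the estimate for $q$ above, $\binom{k}{n_k}=2^{D_k}$ with $D_k=kH(p)+O(\sqrt k)\to\infty$. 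Splitting the sum at $D_k$: for $d\le D_k$ the summand is $(2p)^d/2^{D_k}$ and, since $2p>1$, the partial geometric sum is $O(p^{D_k})$; for $d>D_k$ the summand is $p^d$ and the tail is $O(p^{D_k})$. Since $p^{D_k}=2^{(kH(p)+O(\sqrt k))\log_2 p}\to0$, we conclude $b_2\to0$.

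Putting the three bounds together, the total variation distance between $\ddot M_k$ and $\Po(\lambda_k)$ tends to $0$, and since $\lambda_k\to\lambda$ the same holds for $\Po(\lambda_k)$ and $\Po(\lambda)$; hence $\ddot M_k\xrightarrow{d}\Po(\lambda)$. The only delicate step is the estimate of $b_2$: one must simultaneously exploit that overlapping windows force $\omega$ to be $d$-periodic---which cuts the number of contributing words by a factor controlled by $2^d/\binom{k}{n_k}$---and that the periodic extension costs an extra factor $p^{\,d}$, and then balance these two effects through the local limit asymptotics $\Ber^k(F_k)=\Theta(k^{-1/2})$ in the two regimes $2^d\le\binom{k}{n_k}$ and $2^d>\binom{k}{n_k}$.
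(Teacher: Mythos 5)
Your argument is correct, and it takes a genuinely different route from the paper's proof. The paper applies the dependency-graph form of the Stein--Chen bound (\cite[Theorem 6.23]{JanLucRuc00}) and then estimates the pair correlations by distinguishing whether the overlap $\ell=k-|i-j|$ is below or above $\sqrt{k}$; in the short-overlap range it bounds $\ddot\E_k[I_iI_j]\le q^2(1-p)^{-\ell}=O(q^{1.9})$, and in the long-overlap range it passes through $P_k(Z_\ell^k)\le(p^2+(1-p)^2)^\ell$ and uses $\Ber^k(F_k)=\Theta(k^{-1/2})$. You instead invoke the Arratia--Goldstein--Gordon version of Chen--Stein and handle the long-range dependence through the $b_3$ term, which you show vanishes \emph{exactly} because $\ddot\E_k[I_j\mid\omega]=q$ is constant on $F_k$ and the windows are disjoint. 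This is arguably a cleaner way to dispose of the ``far'' indices: in the paper's framing one must actually verify the dependency-graph property (mutual independence of non-adjacent families), which is a stronger claim than the pairwise independence the text records, whereas your $b_3=0$ argument only needs the one-point conditional expectation to be deterministic. For the near pairs you also parametrise by the shift $d=|i-j|$ rather than the overlap, extract the periodicity constraint as $N_d\le\min(2^d,\binom{k}{n_k})$, and split the sum at $d\approx D_k=\log_2\binom{k}{n_k}\sim kH(p)$, balancing the ``periodic-words-are-rare'' factor $2^d/\binom{k}{n_k}$ against the ``extension cost'' $p^d$; the paper's split at $\ell=\sqrt{k}$ achieves the same end with different bookkeeping. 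Both estimates ultimately hinge on the same two facts (periodicity forced by overlap, and $\Ber^k(F_k)=\Theta(k^{-1/2})$), and both conclude by $\lambda_k\to\lambda$ and continuity of $\lambda\mapsto\Po(\lambda)$ in total variation. One small point worth flagging: your geometric-sum estimate ``$2p>1$'' and the bound $\Ber^d(\cdot)\le p^d$ silently assume $p>1/2$; this matches the paper's stated WLOG convention, but the case $p=1/2$ (which the proposition covers) needs the trivial variant --- the paper isolates it in a separate remark, and your sum is easily checked to go to zero there too since the summand is $\min(1/\binom{k}{n_k},2^{-d})$.
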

In this case the Poisson limit theorem is not applicable, as the indicators in the definition of~$\ddot M_k$ are not all independent. As such, the proof of Proposition~\ref{prop:conditional_poisson_annealed} is more involved than that of Claim~\ref{claim:cond_non_intersecting}, and requires some preparation.
 Without loss of generality, in the following we deal with the case $p>1/2$ (see Remark~\ref{rem:p=1/2} for $p=1/2$).

\begin{definition}
Let $\{ I_j \}_{j \in J}$ be a family of random variables on the same probability space. A \textit{dependency graph} for such a family is a graph $L$ with underlying vertex set $J$, such that for any pair of
disjoint subsets $A, B \subseteq J$ of vertices with no edges $(a, b)$, $a \in A$, $b \in B$ connecting them,
the subfamilies $\{ I_i \}_{i \in A}$ and $\{ I_j \}_{j \in B}$ are mutually independent.
\end{definition}
 \noindent Note that a dependency graph is in general not unique.

We denote with $d_{TV}$ for the \textit{total variation} distance on the space of probability measures of a measurable space $(\Lambda,\mathcal F)$. That is
\begin{equation*}
d_{TV}(P,Q)=\sup_{ F\in\mathcal F}\left|P(F)-Q(F)\right|.
\end{equation*}
If $X$ and $Y$ are real random variables (possibly defined on different spaces), we write $d_{TV}(X,Y)$ to indicate the total variation distance between the laws of~$X$ and~$Y$, that is

\begin{equation*}
d_{TV}(X,Y)=\sup_{ A}\left|\P(X\in A)-\P(Y\in A)\right|,
\end{equation*}
where $A$ runs over measurable subsets of $\R$.
Proving that $\lim_{k\to\infty}d_{TV}(X_k,Y)=0$ for a sequence~$X_k$ of random variables clearly implies that $X_k$ converges in distribution to $Y$.

To prove Proposition~\ref{prop:conditional_poisson_annealed}, we  will apply the following general result for Poisson convergence, as done in \cite{AlvBecMer23}.

\begin{theorem}[{\cite[Theorem 6.23]{JanLucRuc00}}]\label{thm:dtv}
Let $\Po(\lambda)$ be a Poisson random variable with mean $\lambda>0$.
Let  $\crb{ I_j}_{j \in J}$ be a family of indicator random variables on a given  probability space and
let $L$ be a  dependency graph of $\crb{ I_j}_{j \in J}$, with underlying vertex set $J$.
Suppose that the random variable $X_J = \sum_{j\in J} I_j$
satisfies $\lambda = \E \sqb{ X_J} = \sum_{j\in J} \E \sqb{ I_j } $.
Then,
\begin{align*}
    d_{TV} (X_J, \Po(\lambda) )
    \leq \min\crb{ 1, \lambda^{-1}}\prs{  \sum_{j \in J} \E \sqb{I_j}^2 
    +  \sum_{\substack{ i,j : (i,j) \in \text{edges}(L)  }}
\Big(     \E \sqb{I_i I_j} + \E \sqb{I_i}  \E \sqb{ I_j } 
    \Big)}
\end{align*}
\end{theorem}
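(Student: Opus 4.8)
The plan is to establish this via the Chen--Stein method, running the standard ``local'' argument adapted to the dependency graph $L$. Since both laws are supported on $\N\cup\{0\}$, it suffices to bound $\abs{\P(X_J\in A)-\P(\Po(\lambda)\in A)}$ uniformly over $A\subseteq\N\cup\{0\}$. Fix such an $A$ and let $g=g_A\colon\N\cup\{0\}\to\R$ solve the Poisson--Stein recursion $\lambda g(m+1)-m\,g(m)=\1_A(m)-\P(\Po(\lambda)\in A)$ for all $m\ge0$, normalised by $g(0)=0$ (the remaining values are then uniquely determined). Evaluating this identity at $X_J$ and taking expectations yields the basic identity
\[
\P(X_J\in A)-\P(\Po(\lambda)\in A)=\E\bigl[\lambda\, g(X_J+1)-X_J\,g(X_J)\bigr],
\]
so it is enough to bound the right-hand side uniformly in $A$. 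The single analytic ingredient I would record is the magnitude estimate $\|\Delta g\|:=\sup_{m\ge1}\abs{g(m+1)-g(m)}\le(1-e^{-\lambda})/\lambda\le\min\{1,\lambda^{-1}\}$, obtained from the explicit formula for $g_A$ in terms of the Poisson weights; this is the classical bound of Barbour and Eagleson, and I expect it to be the only genuinely delicate step (it may also simply be quoted, being standard in the Poisson approximation literature).

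The next step is to unfold the sum $X_J=\sum_{j\in J}I_j$ using the dependency graph. For $j\in J$ let $N_j\subseteq J$ be the closed neighbourhood of $j$ in $L$, and split $X_J=Y_j+Z_j$ with $Y_j=\sum_{i\in N_j}I_i\ge I_j$ and $Z_j=\sum_{i\notin N_j}I_i$. Since $L$ has no edges between the disjoint sets $\{j\}$ and $J\setminus N_j$, the defining property of a dependency graph gives that $I_j$ is independent of $Z_j$. Writing $p_i=\E[I_i]$ and using $\lambda=\sum_{j}p_j$,
\[
\E\bigl[\lambda\, g(X_J+1)-X_J\,g(X_J)\bigr]=\sum_{j\in J}\Bigl(p_j\,\E[g(X_J+1)]-\E[I_j\,g(X_J)]\Bigr),
\]
and I would split the $j$-th term as $(\mathrm{a})_j+(\mathrm{b})_j$ with $(\mathrm{a})_j=p_j\bigl(\E[g(X_J+1)]-\E[g(Z_j+1)]\bigr)$ and $(\mathrm{b})_j=p_j\,\E[g(Z_j+1)]-\E[I_j\,g(X_J)]$. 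Since $X_J-Z_j=Y_j\ge0$ and every argument of $g$ occurring here is a positive integer, $\abs{(\mathrm{a})_j}\le\|\Delta g\|\,p_j\,\E[Y_j]=\|\Delta g\|\,p_j\sum_{i\in N_j}p_i$. For $(\mathrm{b})_j$, independence gives $p_j\,\E[g(Z_j+1)]=\E[I_j]\,\E[g(Z_j+1)]=\E[I_j\,g(Z_j+1)]$, so $(\mathrm{b})_j=\E\bigl[I_j\bigl(g(Z_j+1)-g(X_J)\bigr)\bigr]$; on $\{I_j=1\}$ one has $X_J=Z_j+Y_j$ with $Y_j\ge1$, hence $\abs{(\mathrm{b})_j}\le\|\Delta g\|\,\E[I_j(Y_j-1)]=\|\Delta g\|\,\E[I_j(Y_j-I_j)]=\|\Delta g\|\sum_{i\in N_j\setminus\{j\}}\E[I_iI_j]$, using $I_j^2=I_j$.

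Finally I would sum over $j$ and tidy the bookkeeping. From $\sum_{i\in N_j}p_i=p_j+\sum_{i\in N_j\setminus\{j\}}p_i$ we get $\sum_{j}p_j\sum_{i\in N_j}p_i=\sum_{j}p_j^2+\sum_{j}\sum_{i\in N_j\setminus\{j\}}p_ip_j$, and the double sum $\sum_{j}\sum_{i\in N_j\setminus\{j\}}$ runs exactly over the ordered pairs $(i,j)$ that are edges of $L$. Combining the estimates for $(\mathrm{a})_j$ and $(\mathrm{b})_j$ and inserting $\|\Delta g\|\le\min\{1,\lambda^{-1}\}$ gives
\[
\bigl|\P(X_J\in A)-\P(\Po(\lambda)\in A)\bigr|\le\min\{1,\lambda^{-1}\}\Bigl(\sum_{j\in J}\E[I_j]^2+\sum_{(i,j)\in\text{edges}(L)}\bigl(\E[I_iI_j]+\E[I_i]\E[I_j]\bigr)\Bigr),
\]
and taking the supremum over $A\subseteq\N\cup\{0\}$ yields the claimed bound on $d_{TV}(X_J,\Po(\lambda))$. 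Apart from the estimate on the Stein solution, every step above is the independence splitting coming from the dependency graph together with elementary counting.
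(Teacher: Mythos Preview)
The paper does not supply a proof of this statement: it is quoted verbatim as \cite[Theorem~6.23]{JanLucRuc00} and then \emph{applied} in the proof of Proposition~\ref{prop:conditional_poisson_annealed}. So there is no in-paper argument to compare against.

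That said, your sketch is the standard Chen--Stein/Arratia--Goldstein--Gordon derivation of exactly this bound, and it is correct. The only point I would flag is the interpretation of ``$(i,j)\in\text{edges}(L)$'': in your final bookkeeping the double sum $\sum_{j}\sum_{i\in N_j\setminus\{j\}}$ ranges over \emph{ordered} adjacent pairs, and this is also how the paper uses the inequality (the application in the proof of Proposition~\ref{prop:conditional_poisson_annealed} sums over $\{i,j:|i-j|<k\}$, i.e.\ both orderings). With that reading your conclusion matches the stated theorem exactly. The one step you defer---the bound $\|\Delta g_A\|\le(1-e^{-\lambda})/\lambda\le\min\{1,\lambda^{-1}\}$ on the solution of the Poisson--Stein equation---is indeed the Barbour--Eagleson estimate and is the standard analytic input here; quoting it is appropriate.
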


\begin{definition}\label{def:Z}
For $1\le \ell\le k$, define $Z_\ell^k=F_k\cap\crb{\omega:\prs{\omega_1,...,\omega_\ell}=\prs{\omega_{k-\ell+1},...,\omega_k}}$, i.e.\@~the set of words with the first $\ell$ characters are identical to the last $\ell$.
These are the only words for which $I_i(x,\omega)I_j(x,\omega)$ is not identically zero when $k-(j-i)=\ell$ (see illustration).
\end{definition}
\begin{center}

\begin{tikzpicture}
    \draw[->] (0,0) -- (6,0);
    
    \foreach \x/\label in {1/i, 2/j, 4/i+k, 5/j+k} {
        \node at (\x,0.3) {\label};
    \draw[shift={(\x,0)}] (0,-0.1) -- (0,0.1);}
    \draw[decorate,decoration={brace,mirror,amplitude=10pt}] (2.1,-0.1) -- (3.9,-0.1) node[midway,yshift=-8pt,below] {$\ell$};
\end{tikzpicture}
\end{center}

\begin{lemma}\label{lemma:expI_iI_j}
    For $1\le i<j$, denote by $\ell=k-(j-i)$ and recall from \eqref{def:q} that~$q=\ddot \E_k[I_i]$. For all sufficiently large $k$:
    \begin{enumerate}
        \item If $\ell<\sqrt{k}$, then $\ddot \E_k[I_i I_j]=O\prs{q^{1.9}}$.
        \item\label{item:2.} If $\ell\ge \sqrt{k}$, then $\ddot \E_k[I_i I_j]=O \prs{\frac{q}{k}(p^2+(1-p)^2)^{k^{0.4}}}$.
    \end{enumerate}
\end{lemma}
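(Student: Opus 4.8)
\textbf{Proof plan for Lemma~\ref{lemma:expI_iI_j}.}
The plan is to compute $\ddot\E_k[I_iI_j]$ directly from its definition as a conditional expectation, $\ddot\E_k[I_iI_j]=P_k(I_iI_j=1\mid F_k)=P_k(I_iI_j=1,F_k)/P_k(F_k)$, and then exploit the structure of the overlap pattern. By the De Moivre--Laplace estimate~\eqref{eq:local_CLT}, the denominator $P_k(F_k)=\Ber^k(F_k)=\Theta(1/\sqrt k)$, so it suffices to bound the numerator. The event $\{I_i(x,\omega)I_j(x,\omega)=1\}$ forces $x_i\ldots x_{i+k-1}=\omega$ and $x_j\ldots x_{j+k-1}=\omega$; since the two windows overlap in $\ell=k-(j-i)$ coordinates, this is possible only when $\omega$ is ``$\ell$-periodic'' at its ends, i.e.\@ $\omega\in Z_\ell^k$ from Definition~\ref{def:Z}. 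For such $\omega$, the union of the two windows in $x$ has length $2k-\ell$, and once $\omega$ is fixed the probability (over $x$) that both windows equal $\omega$ is $\Ber^{2k-\ell}$ of the unique determined string, which equals $p^{2|\omega|-a}(1-p)^{(2k-\ell)-(2|\omega|-a)}$ where $a$ is the Hamming weight of the shared length-$\ell$ prefix of $\omega$. Summing over $\omega\in Z_\ell^k$ (equivalently, over the free $k-\ell$ middle coordinates and the $\ell$ shared end-coordinates, subject to the total weight being $n_k$) and comparing with $q=p^{n_k}(1-p)^{k-n_k}$ gives the two cases.

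For case~(1), $\ell<\sqrt k$: here the number of shared coordinates is small, so the ``savings'' from periodicity are limited. The rough heuristic is $\ddot\E_k[I_iI_j]\approx q\cdot(\text{probability the second window matches given the first})$, and this conditional probability is at most $\max_\omega p^{|\omega|-a}(1-p)^{(k-\ell)-(|\omega|-a)}$ over the relevant range, which one bounds by $(\max(p,1-p))^{k-\ell}\le p^{k-\sqrt k}$. Since $q=p^{n_k}(1-p)^{k-n_k}$ with $n_k=pk-\Theta(\sqrt k)$, a direct comparison of exponents shows $p^{k-\sqrt k}\le q^{0.9}$ for large $k$ — essentially because $q=2^{-k(H(p)+o(1))}$ while $p^{k-\sqrt k}=2^{-k(\log_2(1/p)+o(1))}$ and $\log_2(1/p)>0.9\,H(p)$ once... hmm, this inequality needs $\log_2(1/p) \ge 0.9 H(p)$, which holds for all $p\in(1/2,1)$ since $H(p)\le 1 < \log_2(1/p)/\log_2(1/p)$... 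I should double-check: $H(p) = -p\log_2 p - (1-p)\log_2(1-p)$ and we want $-\log_2 p \ge 0.9 H(p)$; at $p\to 1/2$ both sides... $-\log_2(1/2)=1$, $0.9H(1/2)=0.9$, OK; at $p\to 1$, $-\log_2 p\to 0$ and $H(p)\to 0$, need the ratio — since $H(p)\sim (1-p)\log_2\frac{1}{1-p}$ dominates $-\log_2 p\sim (1-p)/\ln 2$, the inequality can fail near $p=1$. So the clean exponent bound must instead track the $\sqrt k$ gap: $q = p^{pk-c\sqrt k}(1-p)^{(1-p)k+c\sqrt k}$, and the matching probability for the second window, because $\omega\in Z_\ell^k$ pins down $\ell$ end-coordinates that are ``paid for'' once but matched twice, is smaller than $q$ by roughly a factor $p^{\text{(weight in the }\ell\text{ block)}}$; careful bookkeeping of which coordinates are double-counted yields the $q^{1.9}$ bound. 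The precise combinatorial identity is $P_k(I_iI_j=1,F_k)=\sum_{\omega\in Z_\ell^k}\Ber^k(\omega)\cdot\Ber^{k-\ell}(\text{tail of }\omega)$, and one estimates the inner sum by the local limit theorem again.

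For case~(2), $\ell\ge\sqrt k$: now many coordinates are shared, and the periodicity constraint $\omega\in Z_\ell^k$ is very restrictive. Writing $\omega$ in terms of its first $\ell$ coordinates (call their weight $a$) and its middle $k-\ell$ coordinates (weight $b$, so $2a+b$... no: the shared block appears once at the front and once at the back but in $\omega$ itself only the front and back length-$\ell$ blocks being equal means $\omega$'s weight is $a + (\text{middle } k-2\ell \text{ weight}) + a$ if $\ell \le k/2$, or a more degenerate count if $\ell>k/2$). The key point is that $\Ber^k(Z_\ell^k)$, summed correctly, produces a factor $(p^2+(1-p)^2)^{\ell}$ or similar from the ``$\ell$ coordinates counted twice'' — analogous to~\eqref{eq:exp_Ij} — and restricting to weight $n_k$ via the local CLT contributes the $q/k$. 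Combined with $\ell\ge\sqrt k=k^{0.5}\ge k^{0.4}$... actually the bound claimed is $(p^2+(1-p)^2)^{k^{0.4}}$, which is an \emph{upper} bound valid because $(p^2+(1-p)^2)<1$ and $\ell\ge\sqrt k> k^{0.4}$, so $(p^2+(1-p)^2)^\ell\le(p^2+(1-p)^2)^{k^{0.4}}$. So I would first prove the exact-ish estimate with exponent $\ell$ and then weaken $\ell$ to $k^{0.4}$ to get a uniform bound over the whole range $\ell\ge\sqrt k$.

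The main obstacle I anticipate is the careful coordinate bookkeeping in case~(1): one must correctly identify, for $\omega\in Z_\ell^k$, exactly which of the $2k-\ell$ coordinates of the union window are free and which are forced, track the Hamming weight of the forced length-$\ell$ overlap block (which can range over $0,\ldots,\ell$), and then evaluate the resulting sum $\sum_{\omega\in Z_\ell^k}\Ber^k(\omega)\Ber^{k-\ell}(\cdot)$ against $q^{1.9}$ uniformly in $\ell<\sqrt k$ — the subtlety being that the worst case is $\ell$ close to $\sqrt k$ and the overlap block being all $0$'s or all $1$'s, and one has to check the exponent inequality survives the $c\sqrt k$ shift in $n_k$. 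Handling $\ell>k/2$ (windows overlapping past their midpoints, so the periodicity cascades and $\omega$ becomes genuinely periodic with period $j-i$) is a minor additional case that only makes the bounds stronger.
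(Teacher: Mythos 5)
Your plan follows the same overall route as the paper: condition on $\omega\in Z_\ell^k$, use the local CLT to estimate $\Ber^k(F_k)=\Theta(1/\sqrt k)$, and for case (2) reduce to the event $Z_\ell^k$. But case (1) contains a genuine error and is not completed. Your stated ``precise combinatorial identity'' $P_k(I_iI_j=1,F_k)=\sum_{\omega\in Z_\ell^k}\Ber^k(\omega)\cdot\Ber^{k-\ell}(\text{tail of }\omega)$ is off by a factor of $\Ber^k(\omega)$. You correctly computed $\Ber^\N(x:I_i(x,\omega)=I_j(x,\omega)=1)=\Ber^k(\omega)\Ber^{k-\ell}(\text{tail})$, but $P_k=\Ber^\N\times\Ber^k$ supplies an extra $\omega$-marginal factor $\Ber^k(\omega)=q$, so the correct identity is $P_k(I_iI_j=1,F_k)=\sum_{\omega\in Z_\ell^k}\Ber^k(\omega)^2\,\Ber^{k-\ell}(\text{tail})$. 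The missing $q$ is not cosmetic: without it your route yields $\ddot\E_k[I_iI_j]=O(q(1-p)^{-\ell})=O(q^{0.9})$, which is too weak for the downstream application. In the proof of Proposition~\ref{prop:conditional_poisson_annealed} the contribution of short overlaps is $N_k\cdot O(\sqrt k)\cdot\ddot\E_k[I_iI_j]=\Theta(q^{-1}\sqrt k)\cdot\ddot\E_k[I_iI_j]$, and $q^{-1}\sqrt k\cdot q^{0.9}=\sqrt k\,q^{-0.1}\to\infty$. Your earlier ``rough heuristic'' bound $(\max(p,1-p))^{k-\ell}$ has the same defect for a different reason — it discards the constraint $|\omega|=n_k$, which is exactly where the factor of $q$ lives; that is why the exponent comparison near $p=1$ fails. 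The fix is not a local limit theorem for the inner sum but the elementary step the paper takes: for $\omega\in Z_\ell^k$ the conditional probability is $p^{2n_k-u}(1-p)^{2k-2n_k-\ell+u}$ with $u=u_\ell(\omega)$, and since $p>1-p$ this is maximized at $u=0$, giving the uniform bound $q^2(1-p)^{-\ell}$; then $\sum_{\omega\in Z_\ell^k}\ddot P_k(\omega)\le1$, and $(1-p)^{-\ell}\le(1-p)^{-\sqrt k}$ is subexponential while $q$ is exponentially small, so $(1-p)^{-\ell}=O(q^{-0.1})$.

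For case (2) you are close to the paper, but you skip the structural observation that makes the bound clean: under $\ddot P_k$ the indicator $I_i$ is \emph{independent} of $Z_\ell^k$, because $\ddot P_k(I_i=1\mid W=\omega)\equiv q$ on $F_k$; this gives $\ddot\E_k[I_iI_j]\le\ddot P_k(\{I_i=1\}\cap Z_\ell^k)=q\,\ddot P_k(Z_\ell^k)$, after which $\ddot P_k(Z_\ell^k)=P_k(Z_\ell^k)/P_k(F_k)=O(\sqrt k\,(p^2+(1-p)^2)^\ell)$ and $\ell\ge\sqrt k>k^{0.4}$ absorbs the $\sqrt k$ and supplies the $1/k$. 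Your caveat about $\ell>k/2$ (cascading periodicity, so the $\ell$ equality constraints are no longer pairwise disjoint) is a legitimate point — the clean factorization $(p^2+(1-p)^2)^\ell$ needs a small extra argument there — but, as you note, selecting $\Theta(k^{0.4})$ pairwise-disjoint equality pairs suffices for the stated $O((p^2+(1-p)^2)^{k^{0.4}})$, so this is minor.
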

\begin{proof}
\begin{enumerate}
\item Write $I_i$ for the random variables $I_i(x,\omega)$ from \eqref{eq:Indicator}.
$$\ddot\E_k[I_i\cdot I_j]=\ddot{P}_k(I_i\cdot I_j=1)= \ddot{P}_k(I_i\cdot I_j=1\cap Z_{\ell} ^k)=  
\sum_{\omega\in Z_{\ell} ^k}\ddot{P}_k(I_i\cdot I_j=1|\omega)\ddot{P}_k(\omega).$$
Denote by $u_\ell(\omega)=|\prs{\omega_1,\dots,\omega_\ell}|$. We see that for any $\omega\in Z_\ell ^k$
$$\ddot{P}_k(I_i\cdot I_j=1|\omega)=p^{2n_k-u_{\ell}(\omega)}(1-p)^{2k-2n_k-(\ell-u_\ell(\omega))}\le p^{2n_k}(1-p)^{2k-2n_k-\ell},$$
as $p>1-p$. Therefore
$$\sum_{\omega\in Z_{\ell} ^k}\ddot{P}_k(I_i\cdot I_j=1|\omega)\ddot{P}_k(\omega)\le \sum_{\omega\in Z_{\ell} ^k}p^{2n_k}(1-p)^{2k-2n_k-\ell}\ddot{P}_k(\omega).$$
We have $Z_{\ell} ^k\subset F_k$, and so $\ddot{P}_k(\omega)|Z_{\ell} ^k|\le 1$. Since the distribution on words in $F_k$ is uniform (as $\ddot{P}_k(\omega)\equiv~q$), we conclude 
\begin{align*}
\ddot \E_k[I_i I_j]&\le \sum_{\omega\in Z_{\ell} ^k}p^{2n_k}(1-p)^{2k-2n_k-\ell}\ddot{P}_k(\omega)=|Z^k_\ell|\cdot q\cdot p^{2n_k}(1-p)^{2k-2n_k-\ell}\\
&\le p^{2n_k}(1-p)^{2k-2n_k-\ell}=q^2(1-p)^{-\ell}=O(q^{1.9}),
\end{align*}
as $p>1-p$ and $\ell=o(k)$ by assumption.
\item For any $S\subset F_k$, $\ddot{P}_k(I_i=1|S)=\ddot{P}_k(I_i=1)$. In particular $I_i$ is independent of $Z_{\ell} ^k$. Therefore
$$\ddot \E_k[I_i I_j]=\ddot{P}_k(I_i\cdot I_j=1)= \ddot{P}_k(I_i\cdot I_j=1\cap Z_{\ell} ^k)\le \ddot{P}_k(I_i=1\cap Z_{\ell} ^k)=\ddot{P}_k(I_i=1)\ddot{P}_k( Z_{\ell} ^k).$$
Remembering the definition of $\ddot P_k$ and using~\eqref{eq:local_CLT}, we see that there is $C>0$ such that for all sufficiently large $k\ge1$,
$$\ddot{P}_k(I_i=1)\ddot{P}_k( Z_{\ell} ^k)=q\frac{P_k( Z_{\ell} ^k)}{P_k(F_k)}\le qC\sqrt{k}(p^2+(1-p)^2)^{\ell},$$ 
where we use that $P_k( Z_{\ell} ^k)\le (p^2+(1-p)^2)^\ell$ by digit agreement (similarly to \eqref{eq:exp_Ij}).
Since $\ell\ge \sqrt{k}$, we see that
$$\ddot \E_k[I_i I_j]\le qC\sqrt{k}(p^2+(1-p)^2)^{\ell}=o\prs{\frac{q}{k}(p^2+(1-p)^2)^{k^{0.4}}},$$
granting the result.
\qedhere
    \end{enumerate}
\end{proof}

\begin{proof}[Proof of Proposition~\ref{prop:conditional_poisson_annealed}]
    The indicators $I_i,I_j$ are independent w.r.t.\ $\ddot{P}_k$ unless $|i-j|< k$.
    We may therefore use Theorem~\ref{thm:dtv} for the indicators $\crb{I_i:1\le i\le N_k}$, with the dependency graph having the edge set $\crb{(i,j):|j-i|<k}$. 
    Denote $\lambda_k=\ddot \E_k[M_k]$. We notice that 
    $$\lambda_k=q\cdot\floor{\lambda/q}\in \sqb{\prs{\lambda/q-1}q,\lambda},$$
    and so $\lambda_k\to \lambda$. Plugging in the bounds from Lemma~\ref{lemma:expI_iI_j} into Theorem~\ref{thm:dtv} we obtain
    \begin{align*}
    d_{TV} (\ddot M_k, \Po(\lambda_k) )
    \leq& \min\crb{ 1, \lambda_k^{-1}}\prs{  \sum_{m =1} ^{N_k} q^2 
    +  \sum_{\substack{ i,j: |i-j|<k}}
\Big( q^2+\ddot \E_k \sqb{I_i I_j} 
    \Big)}\\
\le& \min\crb{ 1, \lambda_k^{-1}}\big( N_k q^2 + kN_k q^2\big)\\ 
    &+N_k\Big(c_1\sqrt{k}\cdot q^{1.9} +  c_2(k-\sqrt{k})
    \frac{q}{k} \big(p^2+(1-p)^2\big)\Big)^{k^{0.4}},
\end{align*}
where $c_1,c_2>0$ exist by Lemma~\ref{lemma:expI_iI_j}. Since $q=\Theta\prs{N_k^{-1}}$ and $N_k$ is exponential, it follows that $q^2N_k(1+k)$ and $N_k\cdot c_1\sqrt{k}\cdot q^{1.9}$ go to zero.
Note that
\[
N_kc_2(k-\sqrt{k})\frac{q}{k} (p^2+(1-p)^2)^{k^{0.4}}=\Theta\prs{\frac{(k-\sqrt{k})}{k} (p^2+(1-p)^2)^{k^{0.4}}}\to~0.
\]
So, $d_{TV} (\ddot M_k, \Po(\lambda_k) )\to 0$.

By \cite[formula (5)]{Roos03}, $\lambda_k\to \lambda$ entails $d_{TV}(\Po(\lambda_k),\Po(\lambda))\to~0$.
 The total variation distance is a metric, and in particular satisfies the triangle inequality. So, 
$$d_{TV}(\ddot M_k,\Po(\lambda))\le d_{TV}(\ddot M_k,\Po(\lambda_k))+d_{TV}(\Po(\lambda_k),\Po(\lambda))\to 0,$$
finishing the proof.
\end{proof}

\begin{remark}\label{rem:p=1/2}
When $p=1/2$, we can move directly to the proof of Proposition~\ref{prop:conditional_poisson_annealed}, without relying on Lemma~\ref{lemma:expI_iI_j}. In fact, in this case $N_k=\floor{\lambda\cdot 2^{-k}}$ and for any $\omega\in Z_\ell^k$
\[
P_k(I_i\cdot I_j|\omega)=2^{-2k+\ell}
\qquad\text{and}\qquad
|Z_\ell^k|=2^{k-\ell}
.\]
By $\ddot P_k(\omega)=2^{-k}$, it follows that
\[
\ddot\E[I_i\cdot I_j]=
\sum_{\omega\in Z_\ell^k}\ddot P_k(I_i\cdot I_j|\omega)\ddot P_k(\omega)=
2^{-2k},
\]
similarly to~\cite{AlvBecMer23}.
This simplifies the calculation when applying Theorem~\ref{thm:dtv}. 
\end{remark}

\section{Quenched results}\label{sec:quenched}
In this section we pass the results of Sections~\ref{sec:annealed} and~\ref{sec:cond_Poi_conv} to their corresponding quenched versions, thus proving all our main theorems. 
We follow the ideas from~\cite{AlvBecMer23}, utilising the Borel-Cantelli lemma~\cite[Chapter 3, Lemma 1] {Fel71} and the classical concentration inequality of McDiarmid~\cite{McD89}. For a sequence of events $\{E_k\}_{k\ge1}$ such that $\sum_{k=1}^\infty\P(E_k)<\infty$, the Borel-Cantelli lemma states that 
$\P(\limsup_k E_k)=0$, that is the probability that infinitely many events occur is zero.

\begin{proposition}[McDiarmid's inequality]\label{prop:McD}
For $m\ge1$, let $X_1,\dots, X_m$ be independent random variables taking values in a set $\Omega$. Let $f\colon\Omega^m\to\R$ be a function and suppose that there is $c>0$ such that  
\begin{equation}\label{eq:inequality_single_coordinate}
|f(x)-f(x')|<c,    
\end{equation}
for any $x,x'\in\Omega^m$, which differ only in a single coordinate. Write $X=(X_1,\dots, X_m)$, and let~$\P$ be the underlying probability measure. Then, for any $t\ge0$ 
\[
\P\bigl(|f(X)-\E[f(X)]|> t\bigr)\le 2\exp\bigl\{-2t^2/(mc^2)\bigr\}.
\]
\end{proposition}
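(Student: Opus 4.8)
The plan is to deduce the estimate from the Azuma--Hoeffding bound for the Doob (exposure) martingale associated with $f(X)$. Set $Z_0=\E[f(X)]$ and, for $1\le i\le m$, let $Z_i=\E[f(X)\mid X_1,\dots,X_i]$, so that $Z_m=f(X)$ and $(Z_i)_{i=0}^m$ is a martingale with respect to the filtration $\mathcal F_i=\sigma(X_1,\dots,X_i)$. Writing $D_i=Z_i-Z_{i-1}$ for the increments, the structural fact that drives everything is that each $D_i$ has \emph{conditional range at most $c$}: conditionally on $\mathcal F_{i-1}$, the variable $D_i$ almost surely takes values in an interval of length $\le c$.

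To prove this I would use the independence of the coordinates. Fix $x_1,\dots,x_{i-1}$ and define $g_i(t)=\E\bigl[f(x_1,\dots,x_{i-1},t,X_{i+1},\dots,X_m)\bigr]$ for $t\in\Omega$. On the event $\{X_1=x_1,\dots,X_{i-1}=x_{i-1}\}$, independence gives $Z_i=g_i(X_i)$ and $Z_{i-1}=\E[g_i(X_i)]$, so there $D_i=g_i(X_i)-\E[g_i(X_i)]$. Since the two inner functions in the definition of $g_i(t)$ and $g_i(t')$ differ only in the single coordinate where $t$ is replaced by $t'$, hypothesis~\eqref{eq:inequality_single_coordinate} gives $|g_i(t)-g_i(t')|\le c$ for all $t,t'$, hence $\sup_t g_i(t)-\inf_t g_i(t)\le c$, and the centred variable $D_i$ lies in an interval of length $\le c$. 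Now Hoeffding's lemma (if $\E Y=0$ and $Y$ is supported in an interval of length $\le c$, then $\E[e^{sY}]\le e^{s^2c^2/8}$ for all $s\in\R$), applied conditionally on $\mathcal F_{i-1}$, yields $\E[e^{sD_i}\mid\mathcal F_{i-1}]\le e^{s^2c^2/8}$ almost surely.

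Chaining these conditional bounds by the tower property gives
\[
\E\bigl[e^{s(Z_m-Z_0)}\bigr]=\E\Bigl[e^{s\sum_{i=1}^{m-1}D_i}\,\E[e^{sD_m}\mid\mathcal F_{m-1}]\Bigr]\le e^{s^2c^2/8}\,\E\bigl[e^{s\sum_{i=1}^{m-1}D_i}\bigr]\le\cdots\le e^{s^2mc^2/8}.
\]
A Chernoff bound then gives, for $s>0$,
\[
\P\bigl(f(X)-\E[f(X)]>t\bigr)\le e^{-st}\,\E\bigl[e^{s(Z_m-Z_0)}\bigr]\le\exp\bigl\{-st+s^2mc^2/8\bigr\},
\]
and optimising over $s$ (take $s=4t/(mc^2)$) produces the bound $\exp\{-2t^2/(mc^2)\}$. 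Applying the same argument to $-f$ controls the lower tail, and a union bound over the two one-sided events contributes the factor $2$, giving the claimed two-sided estimate.

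The only delicate point — hence the main obstacle — is the rigorous justification of the conditional range bound for $D_i$: one must use independence of $X_1,\dots,X_m$ to identify the conditional expectations $Z_i$ and $Z_{i-1}$ with genuine functions of $(x_1,\dots,x_i)$ obtained by integrating out the remaining coordinates, and then transfer the pointwise single-coordinate bound~\eqref{eq:inequality_single_coordinate} through these expectations (taking care of measurability). Everything else — Hoeffding's lemma, the martingale telescoping, and the Chernoff optimisation — is standard and self-contained.
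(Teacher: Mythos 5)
The paper does not prove this proposition: it invokes it as a black box, citing McDiarmid's original paper \cite{McD89}. Your argument is correct and is precisely the standard proof from that reference: expose the coordinates one at a time via the Doob martingale $Z_i=\E[f(X)\mid X_1,\dots,X_i]$, use independence to show each increment has conditional range at most $c$, apply Hoeffding's lemma conditionally and the Chernoff/Azuma chaining, then optimise in $s$ and use a union bound for the two tails. There is nothing to compare against in the paper, and no gap in your reasoning.
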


Given a sequence $\{N_k\}_{k\ge1}$ of positive numbers, we consider the sequence $M_k$ as defined in~\eqref{eq:Mk}. Let $M_k^x$ defined as in~\eqref{eq:M_k^x}, so  $M_k^x(\omega)=M_k(x,\omega)$ for all $(x,\omega)\in\Omega_k$. 

For a set $\Lambda$ and  $A\subset \Lambda$, we let $\1_A\colon\Lambda\to\{0,1\}$ be the indicator function
\[
\1_A(x)=\begin{cases}
1 &\text{if }x\in A,\\
0 &\text{if }x\in\Lambda\setminus A.
\end{cases}
\]

\begin{lemma}\label{lem:Borel_Cantelli_applied}
Let $p\in(0,1)$. For $k\ge1$ define  $\beta(k)=\sqrt{k}\log k$ and the set $G_k(\beta)$ as in~\eqref{eq:G_k_and_H_k}. For a fixed $n\ge0$, we define 
$f_k\colon\{0,1\}^{\N}\to[0,1]$ as
\begin{equation}\label{eq:f_k}
f_{k}(x)=\Ber^k\bigl((\omega:M_k^x=n)\cap\ G_k\bigr).    
\end{equation} 
If
\begin{equation}\label{eq:Borel_Cantelli}
\sum_{k=1}^\infty
\Ber^{\N}\Bigl(x:\bigl|f_k(x)-
{\textstyle\int f_k\diff\Ber^\N}\bigr|>1/k\Bigr)<\infty,
\end{equation}
then for $\Ber(p)^\N$-a.e.\@~$x\in\{0,1\}^\N$ and as $k\to\infty$,
\begin{equation*}\label{eq:Mk_as_Mkx}
|\Ber^k(M^x_k=n)-P_k(M_k=n)|\to 0.    
\end{equation*}
\end{lemma}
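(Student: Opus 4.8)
The plan is to split the proof into two pieces: first apply the hypothesis~\eqref{eq:Borel_Cantelli} together with the Borel--Cantelli lemma to deduce that $f_k(x) - \int f_k \diff\Ber^\N \to 0$ for $\Ber(p)^\N$-a.e.\ $x$, and second to show that each of $f_k(x)$ and $\int f_k\diff\Ber^\N$ differs from $\Ber^k(M_k^x = n)$ and $P_k(M_k = n)$ respectively by an error term that vanishes uniformly in $x$ as $k\to\infty$.

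For the first part, I would set $E_k = \{x : |f_k(x) - \int f_k\diff\Ber^\N| > 1/k\}$. By~\eqref{eq:Borel_Cantelli} we have $\sum_k \Ber^\N(E_k) < \infty$, so Borel--Cantelli gives $\Ber^\N(\limsup_k E_k) = 0$; hence for a.e.\ $x$, only finitely many $E_k$ occur, meaning $|f_k(x) - \int f_k\diff\Ber^\N| \le 1/k$ for all large $k$, and in particular this difference tends to $0$.

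For the second part, observe that by definition $f_k(x) = \Ber^k((\omega : M_k^x = n) \cap G_k)$ whereas $\Ber^k(M_k^x = n)$ is the same quantity without the intersection with $G_k$; their difference is at most $\Ber^k(H_k) = \Ber^k(\{\omega : |\omega| > pk + \beta(k)\})$ with $\beta(k) = \sqrt{k}\log k$. Since $\beta(k)/\sqrt{k} = \log k \to \infty$, the central limit theorem (as recorded in Remark~\ref{remark:application_CLT}, or directly a Gaussian tail estimate) gives $\Ber^k(H_k) \to 0$. The same bound applies on the annealed side: $\int f_k \diff\Ber^\N = P_k((M_k = n) \cap G_k)$ by Fubini (integrating the inner $\Ber^k$-probability against $\Ber^\N$ in $x$ reconstitutes $P_k$ on $\Omega_k$), and $|P_k((M_k=n)\cap G_k) - P_k(M_k = n)| \le P_k(H_k) = \P(\wt H_k) \to 0$ by~\eqref{eq:identities} and the same CLT estimate. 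Combining, $|\Ber^k(M_k^x = n) - P_k(M_k = n)| \le |f_k(x) - \int f_k\diff\Ber^\N| + \Ber^k(H_k) + P_k(H_k) \to 0$ for a.e.\ $x$.

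I expect the only genuinely delicate point to be bookkeeping: making sure the identification $\int f_k\diff\Ber^\N = P_k((M_k=n)\cap G_k)$ is justified (it is immediate from Fubini--Tonelli since $f_k \ge 0$ and measurable, and $G_k$ depends only on $\omega$) and that the quantitative tail bound $\Ber^k(H_k)\to 0$ is legitimate for the choice $\beta(k) = \sqrt k \log k$ — this is slightly beyond the regime $\beta(k) = s\sqrt k$ of Remark~\ref{remark:application_CLT}, but it only makes the event $H_k$ smaller, so a one-line Chebyshev or standard Binomial tail bound (e.g.\ $\Ber^k(H_k) \le \exp\{-c(\log k)^2\}$) suffices. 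Everything else is a triangle inequality.
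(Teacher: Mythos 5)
Your proof is correct and follows essentially the same route as the paper: Borel--Cantelli for the a.e.\ convergence of $f_k(x)-\int f_k\,\diff\Ber^\N$, Tonelli to identify $\int f_k\,\diff\Ber^\N=P_k(\{M_k=n\}\cap(\{0,1\}^\N\times G_k))$, a tail bound to control the passage between the $G_k$-restricted and unrestricted quantities, and a triangle inequality to assemble the pieces. Your remark that $\beta(k)=\sqrt{k}\log k$ lies outside the $s\sqrt{k}+o(\sqrt{k})$ regime of Remark~\ref{remark:application_CLT}, and that one therefore needs a monotonicity argument or a direct binomial tail bound to conclude $\Ber^k(H_k)\to 0$, is a fair and slightly more careful reading than the paper's one-line appeal to the CLT.
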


\begin{proof}
This result follows by an application of the Borel-Cantelli lemma. We explain the main steps.
Let $\widehat G_k=\{0,1\}^\N\times G_k$, so the CLT gives that $\Ber^k(G_k)=P_k(\widehat G_k)\to 1$.
Note that to prove the thesis it suffices to show that for $\Ber(p)^\N$-a.e.\@~$x$,
\begin{equation}\label{eq:Mk_as_Mkx_conditioned}
|f_k(x)-P_k(M_k=n, \widehat G_k)|\to0.   
\end{equation}
This follows by the triangular inequality and
\[
|\Ber^k(M_k^x=n)-f_k(x)|\to 0,
\qquad\qquad
|P_k(M_k=n, \widehat G_k)-P_k(M_k=n)|\to0.
\]

By assumption~\ref{eq:Borel_Cantelli},
the Borel-Cantelli lemma yields that 
$|f_k(x)-\int f_k\diff\Ber^\N|\to 0$ for a.e.\@~$x$, which  is exactly~\eqref{eq:Mk_as_Mkx_conditioned}.
Since for any $(x,\omega)\in \Omega_k$
$$\1_{\{M_k^x=n,\ G_k\}}(\omega)=
\1_{\{M_k=n,\ \widehat G_k\}}(x,\omega),$$
applying Tonelli's theorem we get
\begin{equation}\label{eq:Tonelli}
\begin{split}
\int_{\{0,1\}^\N} f_k\diff\Ber^\N&=
\int_{\{0,1\}^\N}\int_{\{0,1\}^k}
\1_{\{M_k^x=n,\ G_k\}}(\omega)\diff\Ber^k(\omega)\diff\Ber^\N(x)\\
&=\int_{\Omega_k}\1_{\{M_k=n,\ \widehat G_k\}}(x,\omega)\diff P_k(x,\omega)=P_k(M_k=n,\ \widehat G_k).   
\end{split}
\end{equation}
This implies~\eqref{eq:Mk_as_Mkx_conditioned}, thus finishing the proof.
\end{proof}

Reasoning as in~\cite{AlvBecMer23}[Proof of Theorem 1], in the following we utilise the McDiarmid's inequality to prove condition~\eqref{eq:Borel_Cantelli}. Lemma~\ref{lem:Borel_Cantelli_applied} will then imply that for a.e.\@~$x\in\{0,1\}^\N$ the distributions of $M_k^x$ and $M_k$ share the same asymptotic behaviour, thus passing the annealed result of Proposition~\ref{prop:convergence_annealed} to its corresponding quenched version, Theorem~\ref{thm:main_teo}.

\begin{proof}[Proof of Theorem~\ref{thm:main_teo}]
Let $p\in(1/2,1)$ and fix $n\ge0$. For $\beta(k)=\sqrt{k}\log k$, let $G_k(\beta)$ be as in~\eqref{eq:G_k_and_H_k}. The function $f_k(x)$ defined in~\eqref{eq:f_k} depends only on the first $m_k=N_k+k-1$ coordinates of~$x$. 
Moreover, changing a single coordinate of~$x$ can affect the count 
$\#\{\omega : M_k^x(\omega) = n\}$ by at most~$k$. Since for any~$\omega\in G_k$,
\[
\Ber^k(\omega)\le p^{pk+\beta(k)}(1-p)^{(1-p)k-\beta(k)}=
2^{-k\cdot H(p)}\prs{p/(1-p)}^{\beta(k)}=d_k,
\]
the inequality~\eqref{eq:inequality_single_coordinate} is satisfied for $f_k$ with $c=kd_k$. By Proposition~\ref{prop:McD},
\begin{equation}\label{eq:McD_applied}
\Ber^{\N}\Bigl(x:\bigl|f_k(x)-
{\textstyle\int f_k\diff\Ber^\N}\bigr|>1/k\Bigr)\le
2\exp\Bigl\{-2\bigl(k^4(N_k+k-1)d_k^{2}\bigr)^{-1}\Bigr\}.
\end{equation}

Assume that $N_k=O(2^{k\cdot 3H(p)/2})$. This asymptotic covers both cases~\ref{item:conv_0_quenched} and 
\ref{item:partial_escape_quenched}, and part of~\ref{item:vague_0_quenched}. Under this assumption, there is $C>0$ such that, for any sufficiently large $k\ge1$,
\begin{equation*}
1/(k^4\cdot d_k^2\cdot N_k)\ge C k^{-4}\cdot 2^{k\cdot H(p)/2} \prs{p/(1-p)}^{-2\beta(k)}.    
\end{equation*}
Since the latter grows exponentially fast, using the bound~\eqref{eq:McD_applied} we obtain condition~\eqref{eq:Borel_Cantelli}.
By Lemma~\ref{lem:Borel_Cantelli_applied}, 
\begin{equation*}
|\Ber^k(M^x_k=n)-P_k(M_k=n)|\to 0,
\end{equation*}
for a.e.\@~$x\in\{0,1\}^\N$. 
By Proposition~\ref{prop:convergence_annealed}, this concludes the proof for~\ref{item:conv_0_quenched},~\ref{item:partial_escape_quenched}, and part of~\ref{item:vague_0_quenched}.

We now show point~\ref{item:vague_0_quenched}, dropping the additional assumption on $N_k$.
Consider a new sequence 
\[\textstyle
\widehat N_k=\min\prs{N_k,\lfloor2^{k\cdot 3H(p)/2}\rfloor},
\]
and define from it a new $\widehat M_k^x$ as in~\eqref{eq:M_k^x}. By what seen above, $P_k(\widehat M_k^x\ge n)\to 1$  for a.e.\@~$x$. Since for every $x\in\{0,1\}^\N$ 
\[
P_k(M_k^x\ge n)\ge P_k(\widehat M_k^x\ge n)\to 1,
\]
the proof of~\ref{item:vague_0_quenched} is finished.

\end{proof}

We now have all the necessary tools to show that for $p\neq 1/2$ and a.e.\@ $x\in\{0,1\}^\N$, the sequence $M^x_k$ cannot converge in distribution to a Poisson random variable.
We recall that a sequence $Z_k$ of random variables converges in distribution to $\Po(\lambda)$ for some $\lambda>0$, if 
\[
\lim_{k\to\infty}\P(Z_k=n)=\frac{\lambda^n}{e^{\lambda}\cdot n!}>0.
\]
for any $n\ge0$.

\begin{proof}[Proof of Corollary~\ref{cor:non_Poisson}]
Let $p\neq 1/2$ and consider a sequence $N_k$ of positive integers. Without loss of generality we assume $p\in(1/2,1)$.  
For any $k\ge1$ we may derive a unique $a_k>0$ such that $N_k=2^{k\cdot H(p)}{a_k}^{\sqrt{k}}$. So, there exists a subsequence $a_{k_j}$ satisfying one of the following:
    \begin{enumerate}[(a)]
        \item\label{item:1} $\lim_{j\to\infty}a_{k_j}= 0$.
        \item\label{item:2} $\lim_{j\to\infty}a_{k_j}= \infty$.
        \item\label{item:3} $\lim_{j\to\infty}a_{k_j}= a$, for some $a>0$.
    \end{enumerate}
In cases~\ref{item:1} and~\ref{item:2}, the sequence $N_{k_j}$ satisfies respectively the assumptions of points~\ref{item:conv_0_quenched} and~\ref{item:vague_0_quenched} of Theorem~\ref{thm:main_teo}. In~\ref{item:3}, the asymptotic of the sequence $N_{k_j}$ is the same as point~\ref{item:partial_escape_quenched} of Theorem~\ref{thm:main_teo}. 
In all three cases, for $\Ber^\N$-a.e.\@ $x\in\{0,1\}^\N$ and any $n\ge1$, we have that $\Ber^k(M^x_{k_j}=n)\to 0$ as $j\to\infty$, thus disproving the convergence of~$M^x_k$ to any non-zero random variable.
\end{proof}

\begin{remark}
An alternative proof of Corollary~\ref{cor:non_Poisson} can be carried via the annealed result of Section~\ref{sec:annealed}.
Reasoning as in the proof of Corollary~\ref{cor:non_Poisson} and using Proposition~\ref{prop:convergence_annealed} in place of Theorem~\ref{thm:main_teo}, we can show that the sequence $M_k$ does not converge in distribution to Poisson. Since quenched implies annealed, the contrapositive argument yields the thesis of Corollary~\ref{cor:non_Poisson}. 
\end{remark}

We conclude our paper with the quenched version of the conditional Poisson convergence of Proposition~\ref{prop:conditional_poisson_annealed}. For  $c\in\R$ and $k\ge1$, we let $n_k=\lfloor pk-c\sqrt{k}\rfloor$ and $F_k$ be the set from~\eqref{eq:F_k}. Let $\nu^k=\Ber^k(\cdot\mid F_k)$, as defined in~\eqref{eq:nu}. We recall that $\ddot M_k\colon \Omega_k\to\N\cup\{0\}$ is a sequence defined as in~\eqref{eq:Mk}, according to the probability measure $\ddot P_k=\Ber^\N\times\nu^k$. For any $x\in\{0,1\}^\N$, we let $\ddot M_k^x=\ddot M_k(x,\cdot)$.
As we did in the proof of Theorem~\ref{thm:main_teo}, in the following we follow the ideas of~\cite{AlvBecMer23}, showing that $\ddot M_k$ and $\ddot M_k^x$ share the same asymptotic for a.e.\@~$x$.

\begin{proof}[Proof of Theorem~\ref{thm:quenched_conditional_poisson}]
Let $p\in(0,1)$, $\lambda>0$, $q=p^{n_k}(1-p)^{k-n_k}$ as defined in~\eqref{def:q}, and suppose that 
$N_k=\floor{\lambda/q}$.
Let $n\ge0$, and define for $k\ge1$ the function $g_k\colon\{0,1\}^\N\to[0,1]$ as
\begin{equation*}\label{eq:g_k}
g_{k}(x)=\nu^k(\omega\in\{0,1\}^k:\ddot M_k^x=n).    
\end{equation*} 
Then, $g_k(x)$ depends only on the first $m_k=N_k+k-1$ coordinates of~$x$. By~\eqref{eq:mu^k(omega)}, for any $\omega\in\{0,1\}^\N$
\[
\nu^k(\omega)=\Ber^{k}(\{\omega\}\cap F_k)/\Ber^k(F_k)\le 
q/\Ber^k(F_k)\le \lambda/(N_k\cdot \Ber^k(F_k))=e_k.
\]
So, the inequality~\eqref{eq:inequality_single_coordinate} is satisfied for $g_k$ with $c=ke_k$.
By Proposition~\ref{prop:McD},
\begin{equation*}
\Ber^{\N}\Bigl(x:\bigl|g_k(x)-
{\textstyle\int g_k\diff\Ber^\N}\bigr|>1/k\Bigr)\le
2\exp\Bigl\{-2\bigl(k^4(N_k+k-1)e_k^{2}\bigr)^{-1}\Bigr\}.
\end{equation*}
Since
\[
1/(k^4\cdot N_k\cdot e_k^2)=N_k (\Ber^k(F_k))^2/(\lambda k^4),
\]
where $N_k$ grows exponentially and $\Ber^k(F_k)=\Theta(1/\sqrt{k})$ by~\eqref{eq:local_CLT}, it follows that
\[
\sum_{k=1}^\infty \Ber^{\N}\Bigl(x:\bigl|g_k(x)-
{\textstyle\int g_k\diff\Ber^\N}\bigr|>1/k\Bigr)<\infty.
\]
By Borel-Cantelli, for $\Ber(p)^\N$-a.e.\@~$x$:
\begin{equation*}\textstyle
|g_k(x)-\int g_k\diff\Ber^\N|\to0.
\end{equation*}
Reasoning as in~\eqref{eq:Tonelli} and applying Tonelli's theorem, we get $\int g_k\diff\Ber^\N=\ddot P_k(\ddot M_k=n)$. The proof is finished by Proposition~\ref{prop:conditional_poisson_annealed}.
\end{proof}

\subsection*{Acknowledgments}
The authors thank Yuval Peled and Mike Hochman for helpful discussions about the paper.

\subsection*{Funding}

This work was supported in part by ISF grants 3464/24 (first author) and 3056/21 (second author).

\bibliographystyle{apalike.bst}
\bibliography{References}

\begin{thebibliography}{}

\bibitem[\'Alvarez et~al., 2026]{AlvBecCesMerPerWei24}
\'Alvarez, N., Becher, V., Cesaratto, E., Mereb, M., Peres, Y., and Weiss, B. (2026).
\newblock Poisson genericity in numeration systems with exponentially mixing probabilities.
\newblock {\em Trans. Amer. Math. Soc.}, 379(5):3403--3423.

\bibitem[\'Alvarez et~al., 2023]{AlvBecMer23}
\'Alvarez, N., Becher, V., and Mereb, M. (2023).
\newblock Poisson generic sequences.
\newblock {\em Int. Math. Res. Not. IMRN}, (24):20970--20987.

\bibitem[Becher and Sac~Himelfarb, 2023]{BecSac23}
Becher, V. and Sac~Himelfarb, G. (2023).
\newblock A construction of a {$\lambda$}-{P}oisson generic sequence.
\newblock {\em Math. Comp.}, 92(341):1453--1466.

\bibitem[Borel, 1909]{Bor09}
Borel, M.~{\'E}. (1909).
\newblock Les probabilit{\'e}s d{\'e}nombrables et leurs applications arithm{\'e}tiques.
\newblock {\em Rendiconti Circ. Mat. Palermo}, 27:247--271.

\bibitem[Durrett, 2019]{Dur19}
Durrett, R. (2019).
\newblock {\em Probability---theory and examples}, volume~49 of {\em Cambridge Series in Statistical and Probabilistic Mathematics}.
\newblock Cambridge University Press, Cambridge, fifth edition.

\bibitem[Feller, 1971]{Fel71}
Feller, W. (1971).
\newblock {\em An introduction to probability theory and its applications. {V}ol. {II}}.
\newblock John Wiley \& Sons, Inc., New York-London-Sydney, second edition.

\bibitem[Hochman and Paviato, 2025]{HocPav25}
Hochman, M. and Paviato, N. (2025).
\newblock A threshold for {Poisson} behavior of non-stationary product measures.
\newblock {\em Ergodic Theory Dyn. Syst., to appear.}

\bibitem[Janson et~al., 2000]{JanLucRuc00}
Janson, S., \L\!\!~uczak, T., and Rucinski, A. (2000).
\newblock {\em Random graphs}.
\newblock Wiley-Interscience Series in Discrete Mathematics and Optimization. Wiley-Interscience, New York.

\bibitem[McDiarmid, 1989]{McD89}
McDiarmid, C. (1989).
\newblock On the method of bounded differences.
\newblock In {\em Surveys in combinatorics, 1989 ({N}orwich, 1989)}, volume 141 of {\em London Math. Soc. Lecture Note Ser.}, pages 148--188. Cambridge Univ. Press, Cambridge.

\bibitem[Roos, 2003]{Roos03}
Roos, B. (2003).
\newblock Improvements in the poisson approximation of mixed poisson distributions.
\newblock {\em Journal of Statistical Planning and Inference}, 113(2):467--483.

\bibitem[Weiss, 2020]{Wei20}
Weiss, B. (2020).
\newblock "{P}oisson generic points". {J}ean-{M}orlet {C}hair conference on {D}iophantine {P}roblems", {D}eterminism and {R}andomness. {C}entre {I}nternational de {R}encontres {M}athématiques.
\newblock Audio-visual resource.
\newblock \url{https://doi.org/10.24350/CIRM.V.19690103}.

\end{thebibliography}

\end{document}